\newcommand{\abs}[1]{\left|#1\right|}
\newcommand{\bdry}[1]{\partial #1}
\newcommand{\closure}[1]{\overline{#1}}
\newcommand{\dhalf}{\dfrac{1}{2}}
\newcommand{\dint}{\ds{\int}}
\newcommand{\dist}[2]{\text{dist}\, (#1,#2)}
\newcommand{\ds}[1]{\displaystyle #1}
\newcommand{\eps}{\varepsilon}
\newcommand{\Fucik}{Fu\v c\'\i k }
\newcommand{\half}{\frac{1}{2}}
\newcommand{\id}[1][]{id_{\, #1}}
\newcommand{\ip}[3][]{\left(#2,#3\right)_{#1}}
\newcommand{\norm}[2][]{\left\|#2\right\|_{#1}}
\renewcommand{\O}{\text{O}}
\renewcommand{\o}{\text{o}}
\newcommand{\PS}[1]{$(\text{PS})_{#1}$}
\newcommand{\pnorm}[2][]{\if #1'' \left|#2\right|_p \else \left|#2\right|_{#1} \fi}
\newcommand{\restr}[2]{\left.#1\right|_{#2}}
\newcommand{\seq}[1]{\left(#1\right)}
\newcommand{\set}[1]{\left\{#1\right\}}
\newcommand{\sip}[3][]{(#2,#3)_{#1}}
\newcommand{\snorm}[2][]{\|#2\|_{#1}}
\newcommand{\R}{\mathbb R}
\newcommand{\vol}[1]{\left|#1\right|}
\newcommand{\NN}{\mathbb N}
\newenvironment{enumroman}{\begin{enumerate}

}{\end{enumerate}}
\newtheorem{lemma}{Lemma}[section]
\newtheorem{theorem}[lemma]{Theorem}
\theoremstyle{definition}
\numberwithin{equation}{section}
\title{\bf Nonlocal critical growth elliptic problems with jumping nonlinearities\thanks{{\em MSC2020:} Primary: 47J30, 35R11,
Secondary: 35S15, 35A15.
\newline \indent\; {\em Key Words and Phrases:} nonlocal elliptic problems, critical growth, jumping nonlinearities, nontrivial solutions}}
\author{\bf Giovanni Molica Bisci\\
Dipartimento di Scienze Pure e Applicate (DiSPeA)\\
Universit\`{a} degli Studi di Urbino Carlo Bo\\
Piazza della Repubblica 13, 61029 Urbino, Pesaro e Urbino, Italy\\
\em giovanni.molicabisci@uniurb.it\\
[\bigskipamount]
\bf Kanishka Perera\\
Department of Mathematics\\
Florida Institute of Technology\\
Melbourne, FL 32901, USA\\
\em kperera@fit.edu\\
[\bigskipamount]
\bf Raffaella Servadei\\
Dipartimento di Scienze Pure e Applicate (DiSPeA)\\
Universit\`{a} degli Studi di Urbino Carlo Bo\\
Piazza della Repubblica 13, 61029 Urbino, Pesaro e Urbino, Italy\\
\em raffaella.servadei@uniurb.it\\
[\bigskipamount]
\bf Caterina Sportelli\\
Department of Mathematics and Statistics\\
University of Western Australia\\
35 Stirling Highway, Crawley WA 6009, Australia\\
\em caterina.sportelli@uwa.edu.au}
\date{}
\begin{document}

\maketitle

\hfill

\begin{abstract}
In this paper we study a nonlocal critical growth elliptic problem driven by the fractional Laplacian in presence of jumping nonlinearities.
In the main results of the paper we prove the existence of a nontrivial solution for the problem under consideration, using variational and topological methods and applying a new linking theorems recently got by Perera and Sportelli in \cite{PeSp}.

The existence results provided in this paper can be seen as the nonlocal counterpart of the ones obtained in \cite{PeSp} in the context of the Laplacian equations. In the nonlocal framework the arguments used in the classical setting have to be refined. Indeed the presence of the fractional Laplacian operator gives rise to some additional difficulties, that we are able to overcome proving new regularity results for weak solutions of nonlocal problems, which are of independent interest.

\end{abstract}

\section{Introduction}
Fractional and nonlocal operators appear naturally in many different fields, see, for instance, \cite{apli1, apli2, caff, Vas} and the references therein. This is one of reason why nonlocal fractional problems are widely studied in the current literature.

From a mathematical point of view, nonlocal fractional Laplacian problems (and their generalizations to integrodifferential operators) have been considered in many different contexts and different results got in the classical context of uniformly elliptic equations have been extended to this nonlocal framework,
see for instance the monograph~\cite{MR3445279} for superlinear and subcritical cases, critical ones, and many others.

Motivated by the recent paper \cite{PeSp}, where the authors studied a critical growth elliptic problem with jumping nonlinearities, in this paper we consider its nonlocal counterpart. Precisely, here we study the existence of nontrivial solutions to the following nonlocal critical growth elliptic problem with a jumping nonlinearity
\begin{equation} \label{1}
\left\{\begin{aligned}
(- \Delta)^s\, u & = bu^+ - au^- + |u|^{2_s^\ast - 2}\, u && \text{in } \Omega\\[10pt]
u & = 0 && \text{in } \R^N \setminus \Omega,
\end{aligned}\right.
\end{equation}
where $\Omega$ is a open bounded subset of $\R^N$ with Lipschitz boundary satisfying the exterior ball condition, $(- \Delta)^s$ is the fractional Laplacian operator defined on smooth functions by
\[
(- \Delta)^s\, u(x) = 2\, \lim_{\eps \searrow 0}\, \int_{\R^N \setminus B_\eps(x)} \frac{u(x) - u(y)}{|x - y|^{N+2s}}\, dy, \quad x \in \R^N,
\]
$s \in (0,1)$, $N > 2s$, $2_s^\ast = 2N/(N - 2s)$ is the fractional critical Sobolev exponent, $a, b > 0$, and $u^\pm = \max \set{\pm u,0}$ are the positive and negative parts of $u$, respectively.

When $a = b = \lambda$, problem \eqref{1} reduces to the Br{\'e}zis-Nirenberg problem for the fractional Laplacian
\begin{equation} \label{2}
\left\{\begin{aligned}
(- \Delta)^s\, u & = \lambda u + |u|^{2_s^\ast - 2}\, u && \text{in } \Omega\\[10pt]
u & = 0 && \text{in } \R^N \setminus \Omega.
\end{aligned}\right.
\end{equation}
It was shown in Servadei \cite{MR3089742,MR3237009} and Servadei and Valdinoci \cite{MR3060890,MR3271254} that this problem has a nontrivial solution in each of the following cases:
\begin{enumroman}
\item $N > 4s$ and $\lambda > 0$,
\item $N = 4s$ and $\lambda > 0$ is not a Dirichlet eigenvalue of $(- \Delta)^s$ in $\Omega$,
\item $2s < N < 4s$ and $\lambda > 0$ is sufficiently large.
\end{enumroman}
This extends to the fractional setting the well-known results of Br{\'e}zis and Nirenberg \cite{MR709644} and Capozzi et al.\! \cite{MR831041} for critical Laplacian problems (see also Ambrosetti and Struwe \cite{MR829403} and Costa and Silva \cite{MR1306583}).

The set $\Sigma((- \Delta)^s)$ consisting of points $(a,b) \in \R^2$ for which the problem
\begin{equation} \label{3}
\left\{\begin{aligned}
(- \Delta)^s\, u & = bu^+ - au^- && \text{in } \Omega\\[10pt]
u & = 0 && \text{in } \R^N \setminus \Omega
\end{aligned}\right.
\end{equation}
has a nontrivial solution is called the Dancer-\Fucik spectrum of $(- \Delta)^s$ in $\Omega$. Denoting by $\seq{\lambda_l}$ the sequence of Dirichlet eigenvalues of $(- \Delta)^s$, the spectrum $\Sigma((- \Delta)^s)$ contains the points $(\lambda_l,\lambda_l)$ since problem \eqref{3} reduces to the Dirichlet eigenvalue problem for $(- \Delta)^s$ when $a = b$. Moreover, it follows from the abstract results in Perera and Schechter \cite[Chapter 4]{MR3012848} that in the square
\[
Q_l = (\lambda_{l-1},\lambda_{l+1}) \times (\lambda_{l-1},\lambda_{l+1}),
\]
$\Sigma((- \Delta)^s)$ contains two strictly decreasing curves
\[
C_l : b = \nu_{l-1}(a), \qquad C^l : b = \mu_l(a),
\]
with
\[
\nu_{l-1}(a) \le \mu_l(a), \qquad \nu_{l-1}(\lambda_l) = \lambda_l = \mu_l(\lambda_l),
\]
such that the points in $Q_l$ that are either below the lower curve $C_l$ or above the upper curve $C^l$ are not in $\Sigma((- \Delta)^s)$, while the points between them may or may not belong to $\Sigma((- \Delta)^s)$ when they do not coincide (see Theorem \ref{Theorem 5}).

The main results of the paper are the following.

\begin{theorem} \label{Theorem 1}
Let $N > 2 \left(\sqrt{2} + 1\right) s$. If $(a,b) \in Q_l$ and
\[
b \ge \mu_l(a)
\]
for some $l \ge 2$, then problem \eqref{1} has a nontrivial solution.
\end{theorem}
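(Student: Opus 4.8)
I would recast problem \eqref{1} variationally and obtain the solution as a critical point at a suitable minimax level, via the abstract linking theorem of \cite{PeSp}. The natural space is $X_0^s(\Omega)$, the functions in $H^s(\R^N)$ vanishing a.e.\ outside $\Omega$, with norm $\|u\|^2 = C(N,s)\iint_{\R^{2N}}|u(x)-u(y)|^2|x-y|^{-N-2s}\,dx\,dy$ normalized so that $\langle\cdot,\cdot\rangle$ represents $(-\Delta)^s$; the weak solutions of \eqref{1} are exactly the critical points of
\[
E(u) = \frac12\|u\|^2 - \frac b2\int_\Omega (u^+)^2\,dx - \frac a2\int_\Omega (u^-)^2\,dx - \frac{1}{2_s^\ast}\int_\Omega |u|^{2_s^\ast}\,dx .
\]
Writing $S$ for the best fractional Sobolev constant, the jumping term is subcritical, so the only loss of compactness comes from the critical nonlinearity; arguing as in the Br\'ezis-Nirenberg analysis for $(-\Delta)^s$, I would first check that $E$ satisfies \PS{c} for every $c<\frac sN S^{N/2s}$ (the critical term forcing boundedness of such sequences, independently of $(a,b)$).

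Next I would set up the linking. Since $l\ge2$ and $(a,b)\in Q_l$ lies on or above the upper \Fucik curve $C^l$, the Dancer-\Fucik structure recalled in Theorem \ref{Theorem 5} (following Perera-Schechter \cite[Chapter 4]{MR3012848}) provides the splitting of $X_0^s(\Omega)$ by the eigenspaces of $(-\Delta)^s$ associated with $\lambda_1,\dots,\lambda_l$, together with the cone data encoding the sign condition coming from $u^\pm$; combined with the fact that the critical term sends $E$ to $-\infty$ along rays, this yields --- for a large ball and a small sphere --- sets $A_0\subset A$ and $B$ with $A_0$ homotopically linking $B$, $\sup_{A_0}E\le0$ and $\inf_B E>0$. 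The linking theorem of \cite{PeSp}, tailored to this borderline situation (when $b=\mu_l(a)$ the point $(a,b)$ may lie on $\Sigma((-\Delta)^s)$ and the linking geometry degenerates), then produces a critical point of $E$ at the level
\[
c = \inf_{\gamma\in\Gamma}\ \sup_{u\in A} E(\gamma(u)),
\]
$\Gamma$ the class of deformations fixing $A_0$, provided $0<c<\frac sN S^{N/2s}$. The bound $c>0$, hence nontriviality of the solution ($E(0)=0$), follows from $\inf_B E>0$, so everything reduces to proving $\sup_A E<\frac sN S^{N/2s}$.

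For this I would substitute, in place of the ray direction in $A$, the family $u_\eps(x)=\eps^{-(N-2s)/2}\,U((x-x_0)/\eps)$, where $U$ is an extremal for the fractional Sobolev inequality, truncated near an interior point $x_0\in\Omega$ and projected off the finite-dimensional subspace; since $U$ decays only polynomially, the truncation together with the Gagliardo tails costs $\O(\eps^{N-2s})$, while the projection onto the eigenspaces is of order $\eps^{(N-2s)/2}$. Expanding $E$ on $\{v+t\tilde u_\eps:\|v\|\le R,\ 0\le t\le R\}$ and maximizing, the leading term is $\frac sN S^{N/2s}$, the jumping part $-\frac b2\int u_\eps^2\sim-\eps^{2s}$ (this uses $N>4s$) is the negative gain, and the remaining contributions are the tails, the term $I_0(v)\le0$ --- which is only degenerate, not strictly negative, on the $\lambda_l$-eigenspace --- and the interaction of the bubble with $v$, where one must handle the genuinely nonlocal fact that $((v+t\tilde u_\eps)^\pm)^2\ne(v^\pm)^2+t^2(\tilde u_\eps^\pm)^2$. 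A careful simultaneous optimization over $t$, over $\|v\|$ and over the cone defining $A$ --- the one done in \cite{PeSp} in the local case --- shows that the competition between the $\eps^{2s}$ gain and these $\eps^{(N-2s)/2}$-type errors works out precisely when $\tfrac{N-2s}{2}>\sqrt2\,s$, i.e.\ $N>2(\sqrt2+1)s$, which is our hypothesis; hence $\sup_A E<\frac sN S^{N/2s}$ for $\eps$ small, and the proof is complete.

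The hard part is this last step, and the obstacle is genuinely nonlocal. Unlike for the Laplacian, $U$ is not compactly supported, so the Gagliardo tails must be tracked through every estimate, and $\|\cdot\|^2$ does not split over $u^+$ and $u^-$, so the interactions between positive and negative parts --- trivial in the local case --- must be estimated by hand; this is where I would need the new $L^\infty$ and H\"older regularity results for weak solutions of nonlocal Dirichlet problems (the exterior ball condition on $\Omega$ entering through the boundary regularity). Combined with the delicate balancing of all the error terms against the $\eps^{2s}$ gain --- exactly what forces $N>2(\sqrt2+1)s$ rather than merely $N>4s$ --- this is the crux; the compactness below $\frac sN S^{N/2s}$, the abstract linking, and the \Fucik linking geometry are then adaptations of known results to the fractional framework.
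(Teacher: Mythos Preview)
Your outline captures the broad architecture --- variational formulation, compactness below $\frac{s}{N}S_{N,s}^{N/2s}$, linking from \cite{PeSp}, and a bubble energy estimate --- but there is a genuine gap in the linking set you intend to use. For the case $b\ge\mu_l(a)$ the abstract result from \cite{PeSp} (Theorem \ref{Theorem 7}\,(ii) here) does \emph{not} link over a linear eigenspace: the relevant set is $Q=\{u+se:u\in B,\ s\ge0\}$ with $B=\{v+\tau(v,a,b):v\in N_l\}$ the \emph{nonlinear} graph of the saddle-reduction map $\tau$ from Theorem \ref{Theorem 4}. Your description (``projected off the finite-dimensional subspace'', ``expanding $E$ on $\{v+t\tilde u_\eps:\|v\|\le R,\ 0\le t\le R\}$'') is the geometry appropriate to Theorem \ref{Theorem 2} (the case $b<\nu_{l-1}(a)$), not to this one. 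The distinction is not cosmetic: it is precisely membership in $B$ that guarantees $\mathcal I(u,a,b)\le0$ (via \eqref{28}--\eqref{30} and $b\ge\mu_l(a)$), and the energy estimate must be proved uniformly over $K=S\cap B$, whose elements are not eigenfunctions and have a nontrivial $M_l$-component.

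This is also where the regularity results enter, and not quite in the way you suggest. The paper does not project the bubble; it takes $e=u_{\eps,\mu}$ with a \emph{coupled} truncation scale $\mu=\eps^{-\gamma}$ and argues separately that $\pm e\notin B$ (using disjoint supports along a sequence). The crucial estimate is then on $\mathcal E(\tau u+\sigma u_{\eps,\mu})$ for $u\in K$, and for this one needs $K$ bounded in $C^2(\overline{B_{1/\mu_0}(x_0)})\cap C(\overline\Omega)$. That bound is not automatic: each $u\in B$ solves $(-\Delta)^s u=bu^+-au^-+\tilde z_u$ with $\tilde z_u\in N_l$ (because $\mathcal I'(u,a,b)\perp M_l$), and one must run the $L^\infty$ estimate of Lemma \ref{lemmaLinfty} and then bootstrap via Ros-Oton--Serra to obtain uniform interior $C^2$ control. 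Finally, the threshold $N>2(\sqrt2+1)s$ does not arise from a bare comparison of $\eps^{2s}$ with $\eps^{(N-2s)/2}$ as you indicate; it is exactly the condition under which one can simultaneously choose the Young-inequality exponent $\beta$ in \eqref{46} and the coupling exponent $\gamma$ in \eqref{50}, so that after the joint scaling $\mu=\eps^{-\gamma}$ every cross term between the bubble and $u\in K$ is dominated by the $\eps^{2s}$ gain.
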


\begin{theorem} \label{Theorem 2}
Let $N \ge 4s$. If $(a,b) \in Q_l$ and
\[
b < \nu_{l-1}(a)
\]
for some $l \ge 2$, then problem \eqref{1} has a nontrivial solution.
\end{theorem}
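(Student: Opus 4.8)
The plan is to produce a nontrivial solution of \eqref{1} as a critical point of the energy functional
\[
\Phi(u) = \frac12\, \|u\|^2 - \frac b2 \int_\Omega (u^+)^2\, dx - \frac a2 \int_\Omega (u^-)^2\, dx - \frac{1}{2_s^\ast} \int_\Omega |u|^{2_s^\ast}\, dx
\]
on the natural energy space $X$ for \eqref{1} (measurable functions vanishing a.e.\ in $\R^N \setminus \Omega$ with finite Gagliardo seminorm $\|\cdot\|$), whose critical points are exactly the weak solutions of \eqref{1}. I would then apply the abstract linking theorem of Perera and Sportelli \cite{PeSp}, which calls for three ingredients: (i) a linking geometry dictated by the position of $(a,b)$ relative to the Dancer--\Fucik spectrum; (ii) a Palais--Smale condition below the first noncompactness level $c^\ast := \frac sN\, S^{N/(2s)}$ coming from the critical term, $S$ being the best fractional Sobolev constant; and (iii) an estimate placing the minimax level of the linking strictly below $c^\ast$.

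For (i), I would split $X = N_{l-1} \oplus M_l$, with $N_{l-1}$ the span of the Dirichlet eigenfunctions of $(-\Delta)^s$ for $\lambda_1 \le \cdots \le \lambda_{l-1}$ (nontrivial since $l \ge 2$) and $M_l = N_{l-1}^\perp$. As $(a,b) \in Q_l$ forces $\min\set{a,b} > \lambda_{l-1}$, the quadratic form $J_{a,b}(u) := \frac12\big(\|u\|^2 - b\int_\Omega (u^+)^2\, dx - a\int_\Omega (u^-)^2\, dx\big)$ satisfies $J_{a,b}(u) \le -\delta\,\|u\|^2$ on $N_{l-1}$ for some $\delta > 0$, so $\Phi \to -\infty$ along $N_{l-1}$; while the hypothesis $b < \nu_{l-1}(a)$ --- i.e.\ that $(a,b)$ is strictly below the lower \Fucik curve $C_l$ --- together with the variational description of $\nu_{l-1}$ behind Theorem \ref{Theorem 5} (cf.\ \cite[Chapter~4]{MR3012848}), gives $\Phi(u) \ge \alpha > 0$ for $u$ in the small sphere $B := \set{u \in M_l : \|u\| = \rho}$ (the critical term being higher order near $0$). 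With linking data given by $A$ = a ball in $N_{l-1}$ capped by a segment in the direction $e_\eps$, where $e_\eps$ is the $M_l$-component of a truncated Talenti instanton $u_\eps$, together with its relative boundary $A_0$ and the sphere $B$, one has $\sup_{A_0}\Phi \le 0 < \alpha \le \inf_B \Phi$ and $\sup_A\Phi < \infty$, so the abstract theorem yields a minimax value $c \ge \alpha > 0$, critical as soon as the Palais--Smale condition holds at level $c$.

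For (ii), a $(\text{PS})_c$ sequence for $\Phi$ is bounded (the jumping term being subcritical) and so converges weakly to a weak solution $u$ of \eqref{1}; a fractional Brezis--Lieb / concentration--compactness analysis shows that the loss of strong convergence is quantized in multiples of $c^\ast$, whence $(\text{PS})_c$ holds for every $c \in (0, c^\ast)$. Here is where the regularity results for weak solutions of nonlocal problems established in the paper are needed, to control $u^\pm$ and to pass to the limit in the nonlinear terms (the exterior ball condition on $\Omega$ entering through boundary regularity). For (iii), I would use the rescaled, truncated Talenti bubbles $u_\eps$ concentrating at an interior point of $\Omega$ --- asymptotically orthogonal to the finite-dimensional space $N_{l-1}$ --- and estimate $\sup_{v \in N_{l-1},\, t \ge 0} \Phi(v + t\, u_\eps)$: the decisive gain is the negative term $-\frac b2 \int_\Omega u_\eps^2$, of order $\eps^{2s}$ when $N > 4s$ and of order $\eps^{2s}\abs{\log \eps}$ when $N = 4s$, which in either case beats the $\O(\eps^{N-2s})$ error produced by the cutoff \emph{precisely because} $N \ge 4s$. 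This gives $c \in (0, c^\ast)$; hence $c$ is a critical value by (ii), and the corresponding critical point $u$ is nontrivial since $\Phi(u) = c > 0 = \Phi(0)$.

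I expect the main obstacle to be the instanton estimate of step (iii) in the borderline dimension $N = 4s$, where the gain $\eps^{2s}\abs{\log\eps}$ and the cutoff error $\O(\eps^{2s})$ are of comparable size and the expansion has to be carried out sharply --- the difficulty being amplified in the nonlocal setting by the slow polynomial decay of the fractional extremal functions and by the nonlocal interaction energy across $\R^N \setminus \Omega$. A second, genuinely nonlocal point, with no counterpart in the local case of \cite{PeSp}, is establishing the regularity theory for weak solutions of the nonlocal problem that underlies the compactness analysis of step (ii).
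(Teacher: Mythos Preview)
Your proposal has two genuine gaps relative to the paper's argument.

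\textbf{The linking set is not a linear sphere.} You claim that $b<\nu_{l-1}(a)$ yields $\Phi\ge\alpha>0$ on a sphere $B=\{u\in M_{l-1}:\|u\|=\rho\}$ (your decomposition $X=N_{l-1}\oplus M_l$ drops $E_l$; presumably you mean $M_{l-1}$). This is not what the variational description of $\nu_{l-1}$ gives. By \eqref{27}--\eqref{29} one has $n_{l-1}(a,b)\ge 0$, i.e.\ $\mathcal I(\theta(w)+w,a,b)\ge 0$ for $w\in M_{l-1}$; since $\theta(w)$ is the \emph{maximizer} of $v\mapsto\mathcal I(v+w)$ over $N_{l-1}$, this says nothing about $\mathcal I(w)$. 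Indeed, for $(a,b)\in Q_l$ with $b>\lambda_l$ (which is compatible with $b<\nu_{l-1}(a)$ when $a<\lambda_l$), $\mathcal I(\cdot,a,b)$ can be negative on $M_{l-1}\cap S$. The paper therefore takes the linking set to be the \emph{nonlinear} manifold $A=\{\theta(w,a/(1-\delta),b/(1-\delta))+w:w\in M_{l-1}\cap S_\rho\}$ and establishes $\inf_A\mathcal E>0$ exactly via \eqref{26}; the relevant abstract tool is Theorem~\ref{Theorem 3}, which is designed to link $A$ with a cone over a perturbation of $N_{l-1}$.

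\textbf{The puncturing map $T$ is the mechanism that yields $N\ge 4s$.} You propose to build $Q$ from $N_{l-1}$ and (the $M$-projection of) a truncated instanton, handling the interaction terms directly. The paper avoids this: it introduces the linear map $Tv=v_\mu=\eta(\mu|x-x_0|)\,v$, which excises a small ball around the concentration point so that $v_\mu$ and $u_{\eps,\mu}$ have disjoint supports, and applies Theorem~\ref{Theorem 3} with $Q=\{Tv+s\,u_{\eps,\mu}\}$. Because the supports are disjoint, the energy decouples as in \eqref{61}, and the only cross term is the purely nonlocal interaction across the annulus, controlled in Lemma~\ref{Lemma 8}. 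This is what produces the clean comparison between the $\eps^{2s}$ (or $\eps^{2s}|\log\eps|$) gain and the $(\mu\eps)^{N-2s}$ loss, and hence the threshold $N\ge 4s$. By contrast, a direct treatment of the $N_{l-1}$--instanton interaction, as in Lemma~\ref{Lemma 5}, forces the extra parameters $\beta,\gamma$ in \eqref{46}--\eqref{50} and the stronger restriction $N>2(\sqrt2+1)s$ of Theorem~\ref{Theorem 1}; your scheme would not reach $N\ge 4s$. A minor point: the regularity results (Lemma~\ref{Lemma 3.1}) enter in the estimates of Lemma~\ref{Lemma 7} (to bound $N_{l-1}\cap S$ in $C^1\cap C$), not in the Palais--Smale analysis.
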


The classical linking arguments used to obtain nontrivial solutions of problem \eqref{2} in \cite{MR3089742,MR3237009,MR3060890} rely on the decomposition of $H^s_0(\Omega)$ into eigenspaces of $(- \Delta)^s$. These arguments are not suitable for proving Theorems \ref{Theorem 1} and \ref{Theorem 2} since problem \eqref{3} is nonlinear and therefore its solution set is not a linear subspace of $H^s_0(\Omega)$ when $(a,b) \in \Sigma((- \Delta)^s)$.

We will prove our existence results using an abstract existence result for problems with jumping nonlinearities and new linking theorems based on a nonlinear splitting of the underlying space that were recently proved by Perera and Sportelli \cite{PeSp}. We will recall these results in Section~\ref{sec:abstractsetting} (see Theorems \ref{Theorem 7} and \ref{Theorem 3}).

The proof of Theorems \ref{Theorem 1} and \ref{Theorem 2} are in the line of the results got in \cite{PeSp}, even if additional difficulties arise, due to the presence of the fractional nonlocal operator $(-\Delta)^s$. In order to overcome these difficulties we need to perform a subtle analysis and to prove some regularity results for the eigenfunctions of $(-\Delta)^s$ (see Lemma \ref{Lemma 3.1}) and for weak solutions of nonlocal problems (see Lemma \ref{lemmaLinfty}). These complications are not only technical, but reflect the nonlocality of the problem.

The paper is organized as follows. Section \ref{sec:abstractsetting} is devoted to the construction of the minimal and maximal curves of the Dancer-\Fucik spectrum in an abstract setting introduced in \cite{MR3012848} and to some abstract existence results of linking type got in \cite{PeSp}. In Section \ref{sec:functional} we introduce the functional setting, give the variational formulation of problem \eqref{1} and prove some regularity results for nonlocal problems. Finally in Section \ref{sec:proofth11} and Section \ref{sec:proofth12} we prove the main results of the paper, concerning the existence of a nontrivial solution for problem \eqref{1}.

\section{Abstract setting}\label{sec:abstractsetting}
This section is devoted to some preliminary results we need in order to prove our existence theorems for problem \eqref{1}. In particular in Subsection \ref{subsec:dancerfucik} we recall some properties of the Dancer-\Fucik spectrum in a suitable abstract setting, while Subsection \ref{subsec:abstractexistence}
deals with some critical points theorems in presence of a suitable linking geometry, recently got in \cite{PeSp}.

\subsection{Dancer-Fucik spectrum in an abstract setting}\label{subsec:dancerfucik}
First we briefly recall the construction of the minimal and maximal curves of the Dancer-\Fucik spectrum in an abstract setting introduced in Perera and Schechter \cite[Chapter 4]{MR3012848}.

Let $H$ be a Hilbert space with the inner product $\ip{\cdot}{\cdot}$ and the associated norm $\norm{\cdot}$. Recall that an operator $\varphi : H \to H$ is monotone if $\ip{\varphi(u) - \varphi(v)}{u - v} \ge 0$ for all $u, v \in H$ and that $\varphi \in C(H,H)$ is a potential operator if $\varphi = \Phi'$ for some functional $\Phi \in C^1(H,\R)$, called a potential for $\varphi$. Assume that there are positive homogeneous monotone potential operators $p, n \in C(H,H)$ such that
\[
p(u) + n(u) = u, \quad \ip{p(u)}{n(u)} = 0 \quad \forall u \in H.
\]
We use the suggestive notation $u^+ = p(u),\, u^- = - n(u)$, so that
\[
u = u^+ - u^-, \quad \sip{u^+}{u^-} = 0.
\]
This implies
\[
\norm{u}^2 = \snorm{u^+}^2 + \snorm{u^-}^2,
\]
in particular, $\snorm{u^\pm} \le \norm{u}$.

Let $\mathcal A$ be a self-adjoint operator on $H$ with the spectrum $\sigma(\mathcal A) \subset (0,\infty)$ and $\mathcal A^{-1}$ compact. Then the spectrum $\sigma(\mathcal A)$ of $\mathcal A$ consists of isolated eigenvalues $\lambda_l,\, l \ge 1$ of finite multiplicities satisfying $0 < \lambda_1 < \cdots < \lambda_l < \cdots$.
Moreover, $D = D(\mathcal A^{1/2})$ is a Hilbert space with the inner product
\[
\ip[D]{u}{v} = \sip{\mathcal A^{1/2}\, u}{\mathcal A^{1/2}\, v} = \ip{\mathcal Au}{v}
\]
and the associated norm
\[
\norm[D]{u} = \snorm{\mathcal A^{1/2}\, u} = \ip{\mathcal Au}{u}^{1/2}.
\]
We have
\[
\norm[D]{u}^2 = \ip{\mathcal Au}{u} \ge \lambda_1 \ip{u}{u} = \lambda_1 \norm{u}^2 \quad \forall u \in H,
\]
so $D \hookrightarrow H$, and the embedding is compact since $\mathcal A^{-1}$ is a compact operator.

Let $E_l$ be the eigenspace of $\lambda_l$ and $N_l$ and $M_l$ be defined as follows
\[
N_l = \bigoplus_{j=1}^l E_j, \qquad M_l = N_l^\perp \cap D.
\]
Then $D = N_l \oplus M_l,\, u = v + w$ is an orthogonal decomposition with respect to both $\ip{\cdot}{\cdot}$ and $\ip[D]{\cdot}{\cdot}$. Moreover,
\[
\norm[D]{v}^2 = \ip{\mathcal Av}{v} \le \lambda_l \ip{v}{v} = \lambda_l \norm{v}^2 \quad \forall v \in N_l
\]
and
\[
\norm[D]{w}^2 = \ip{\mathcal Aw}{w} \ge \lambda_{l+1} \ip{w}{w} = \lambda_{l+1} \norm{w}^2 \quad \forall w \in M_l.
\]
We assume that $w^\pm \ne 0$ for all $w \in M_1 \setminus \set{0}$.

The set $\Sigma(\mathcal A)$ consisting of points $(a,b) \in \R^2$ for which the equation
\begin{equation} \label{2.1}
\mathcal Au = bu^+ - au^-, \quad u \in D
\end{equation}
has a nontrivial solution is called the Dancer-\Fucik spectrum of $\mathcal A$. It is a closed subset of $\R^2$ (see \cite[Proposition 4.4.3]{MR3012848}). Since equation \eqref{2.1} reduces to $\mathcal Au = \lambda u$ when $a = b = \lambda$, $\Sigma(\mathcal A)$ contains the points $(\lambda_l,\lambda_l)$.

It is easily seen that $\mathcal A$ is a potential operator with the potential
\[
\half \ip{\mathcal Au}{u} = \half \norm[D]{u}^2.
\]
The potentials of $p$ and $n$ are
\[
\half \ip{p(u)}{u} = \half\, \snorm{u^+}^2, \quad \half \ip{n(u)}{u} = \half\, \snorm{u^-}^2,
\]
respectively (see \cite[Proposition 4.3.2]{MR3012848}). So solutions of equation \eqref{2.1} coincide with critical points of the $C^1$-functional $\mathcal I:D\to \R$ given by
\begin{equation}\label{I}
\mathcal I(u,a,b) = \norm[D]{u}^2 - a\, \snorm{u^-}^2 - b\, \snorm{u^+}^2, \quad u \in D,
\end{equation}
and $(a,b) \in \Sigma(\mathcal A)$ if and only if $\mathcal I(\cdot,a,b)$ has a nontrivial critical point.

Let
\[
Q_l = (\lambda_{l-1},\lambda_{l+1}) \times (\lambda_{l-1},\lambda_{l+1}), \quad l \ge 2.
\]
If $(a,b) \in Q_l$, then $\mathcal I(v + y + w,a,b)$, with $v + y + w \in N_{l-1} \oplus E_l \oplus M_l$, is strictly concave in $v$ and strictly convex in $w$, i.e., for $v_1 \ne v_2 \in N_{l-1},\, w \in M_{l-1}$,
\[
\mathcal I((1 - t)\, v_1 + t v_2 + w,a,b) > (1 - t)\, \mathcal I(v_1 + w,a,b) + t \mathcal I(v_2 + w,a,b) \quad \forall t \in (0,1),
\]
and for $v \in N_l,\, w_1 \ne w_2 \in M_l$,
\[
\mathcal I(v + (1 - t)\, w_1 + t w_2,a,b) < (1 - t)\, \mathcal I(v + w_1,a,b) + t \mathcal I(v + w_2,a,b) \quad \forall t \in (0,1)
\]
(see \cite[Proposition 4.6.1]{MR3012848}).

\begin{theorem}[{\cite[Proposition 4.7.1, Corollary 4.7.3, \& Proposition 4.7.4]{MR3012848}}] \hspace{-0.0001pt} \label{Theorem 4}
Let $(a,b) \in Q_l$.
\begin{enumroman}
\item There is a positive homogeneous map $\theta(\cdot,a,b) \in C(M_{l-1},N_{l-1})$ such that $v = \theta(w,a,b)$ is the unique solution of
    \[
    \mathcal I(v + w,a,b) = \sup_{v' \in N_{l-1}} \mathcal I(v' + w,a,b), \quad w \in M_{l-1}.
    \]
    Moreover, $\theta$ is continuous on $M_{l-1} \times Q_l$, $\theta(w,\lambda_l,\lambda_l) = 0$ for all $w \in M_{l-1}$, and $\mathcal I'(v + w,a,b) \perp N_{l-1}$ if and only if $v = \theta(w,a,b)$.
\item There is a positive homogeneous map $\tau(\cdot,a,b) \in C(N_l,M_l)$ such that $w = \tau(v,a,b)$ is the unique solution of
    \[
    \mathcal I(v + w,a,b) = \inf_{w' \in M_l}\, \mathcal I(v + w',a,b), \quad v \in N_l.
    \]
    Moreover, $\tau$ is continuous on $N_l \times Q_l$, $\tau(v,\lambda_l,\lambda_l) = 0$ for all $v \in N_l$, and $\mathcal I'(v + w,a,b) \perp M_l$ if and only if $w = \tau(v,a,b)$.
\end{enumroman}
\end{theorem}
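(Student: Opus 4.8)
The plan is to exploit the strict concavity/convexity of $\mathcal{I}(\cdot,a,b)$ on the relevant subspaces together with coercivity/anticoercivity estimates coming from the spectral gaps, and to extract continuity from the uniqueness of the critical point via a standard compactness-plus-uniqueness argument. I will only carry out part $(i)$ in detail; part $(ii)$ is entirely dual (replace $\mathcal{I}$ by $-\mathcal{I}$, $N_{l-1}$ by $M_l$, and $a,b$ by suitable reflections), so I will just indicate the symmetry at the end.

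\smallskip
\textbf{Step 1: existence and uniqueness of the maximizer for fixed $w$.} Fix $(a,b) \in Q_l$ and $w \in M_{l-1}$, and consider $g(v) := \mathcal{I}(v + w, a, b)$ for $v \in N_{l-1}$. First I would show $g$ is anticoercive: writing $v = v' + y$ with $v' \in N_{l-2}$, $y \in E_{l-1}$ and estimating $\norm[D]{v}^2 \le \lambda_{l-1}\norm{v}^2$ together with $\norm{v^\pm} \le \norm{v}$, one gets $g(v) \le \lambda_{l-1}\norm{v}^2 - \min\{a,b\}\,(\norm{v^+}^2 + \norm{v^-}^2) + C(w)\norm{v} + C(w)$ — wait, the cross terms between $v$ and $w$ need care since $v^\pm$ is not linear; instead I bound $\mathcal{I}(v+w) \le \mathcal{I}(v+w) $ using the concavity inequality from the excerpt with $v_1 = v$, $v_2 = 0$, which already gives $\mathcal{I}(v+w) < \mathcal{I}(w) + (\text{linear-in-}v \text{ term from } \mathcal{I}'(w)|_{N_{l-1}})$ minus a positive quadratic form in $v$ (the quadratic form is $a\norm{v^-}^2 + b\norm{v^+}^2 - \norm[D]{v}^2 \ge (\min\{a,b\} - \lambda_{l-1})\norm{v}^2$, positive definite on $N_{l-1}$ since $a,b > \lambda_{l-1}$ on $Q_l$). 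Hence $g$ is strictly concave, upper semicontinuous, and anticoercive on the finite-dimensional space $N_{l-1}$, so it attains a strict maximum at a unique point, which I name $\theta(w,a,b)$. Positive homogeneity of degree $1$ follows because $\mathcal{I}(\cdot,a,b)$ is positively homogeneous of degree $2$ and $p,n$ are positively homogeneous of degree $1$: $g(tv; tw) = t^2 g(v;w)$ for $t>0$, so the maximizer scales linearly.

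\smallskip
\textbf{Step 2: the variational characterization.} Since $g$ is $C^1$ on the finite-dimensional space $N_{l-1}$ and strictly concave, $v$ is its (unique) maximum if and only if $g'(v) = 0$, i.e. $\mathcal{I}'(v+w,a,b)$ annihilates $N_{l-1}$. This gives the stated equivalence ``$\mathcal{I}'(v+w,a,b) \perp N_{l-1}$ iff $v = \theta(w,a,b)$''. The fact that $\theta(w,\lambda_l,\lambda_l) = 0$ is immediate: at $a=b=\lambda_l$, $\mathcal{I}(v+w,\lambda_l,\lambda_l) = \norm[D]{v}^2 + \norm[D]{w}^2 - \lambda_l\norm{v}^2 - \lambda_l\norm{w}^2$, which on $N_{l-1}$ is $\le (\lambda_{l-1}-\lambda_l)\norm{v}^2 \le 0$ with equality iff $v=0$ (using $\lambda_{l-1} < \lambda_l$), so the max over $v$ is attained at $v = 0$.

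\smallskip
\textbf{Step 3: joint continuity on $M_{l-1}\times Q_l$.} This is the main obstacle. Let $(w_k, a_k, b_k) \to (w_*, a_*, b_*)$ in $M_{l-1}\times Q_l$ and set $v_k := \theta(w_k, a_k, b_k)$. From the maximality $\mathcal{I}(v_k + w_k, a_k, b_k) \ge \mathcal{I}(0 + w_k, a_k, b_k) = \norm[D]{w_k}^2 - a_k\norm{w_k^-}^2 - b_k\norm{w_k^+}^2$, which is bounded. Combined with the anticoercivity estimate from Step 1 (uniform in $(a_k,b_k)$ on compact subsets of $Q_l$, since $\min\{a_k,b_k\} - \lambda_{l-1}$ stays bounded away from $0$), this forces $(v_k)$ to be bounded in $N_{l-1}$; being finite-dimensional, a subsequence converges, $v_k \to \bar v$. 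Passing to the limit in the inequality $\mathcal{I}(v_k + w_k, a_k, b_k) \ge \mathcal{I}(v' + w_k, a_k, b_k)$ for every fixed $v' \in N_{l-1}$ — using continuity of $\mathcal{I}$ in all its arguments, which holds because $u \mapsto \norm{u^\pm}$ is continuous — yields $\mathcal{I}(\bar v + w_*, a_*, b_*) \ge \mathcal{I}(v' + w_*, a_*, b_*)$ for all $v'$, so $\bar v = \theta(w_*, a_*, b_*)$ by uniqueness. Since every subsequence has a further subsequence converging to the same limit, the whole sequence converges, proving continuity. In particular $\theta(\cdot, a, b) \in C(M_{l-1}, N_{l-1})$ for each fixed $(a,b)$.

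\smallskip
\textbf{Step 4: part $(ii)$ by duality.} Apply Steps 1–3 to $\widetilde{\mathcal{I}}(u, a, b) := -\mathcal{I}(u, a, b) = -\norm[D]{u}^2 + a\norm{u^-}^2 + b\norm{u^+}^2$ restricted to $v + w$ with $v \in N_l$ fixed and $w \in M_l$. On $M_l$ one has $\norm[D]{w}^2 \ge \lambda_{l+1}\norm{w}^2$, and since $a, b < \lambda_{l+1}$ on $Q_l$, the quadratic form $\norm[D]{w}^2 - a\norm{w^-}^2 - b\norm{w^+}^2 \ge (\lambda_{l+1} - \max\{a,b\})\norm{w}^2$ is positive definite on $M_l$; thus $w \mapsto \mathcal{I}(v + w, a, b)$ is strictly convex and coercive on $M_l$, has a unique minimizer $\tau(v,a,b)$, and the same compactness-uniqueness scheme gives joint continuity on $N_l \times Q_l$ and $\tau(v, \lambda_l, \lambda_l) = 0$ (at $a = b = \lambda_l$ the form on $M_l$ is $\ge (\lambda_{l+1} - \lambda_l)\norm{w}^2 > 0$ for $w \ne 0$, minimized at $w = 0$). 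The characterization $\mathcal{I}'(v+w,a,b) \perp M_l \iff w = \tau(v,a,b)$ follows from strict convexity and the $C^1$ structure exactly as in Step 2. \qed

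\smallskip
\emph{Remark on the obstacle.} The only genuinely delicate point is the uniform-in-$(a,b)$ a priori bound on $v_k$ in Step 3: one must ensure the coercivity constant $\min\{a,b\} - \lambda_{l-1}$ does not degenerate, which is exactly why the statement restricts $(a,b)$ to the \emph{open} square $Q_l$ and passes to limits along sequences staying in a compact subset. Everything else is the standard ``finite-dimensional strictly concave optimization + continuity from uniqueness'' package.
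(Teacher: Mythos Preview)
The paper does not supply a proof of this statement at all: Theorem \ref{Theorem 4} is quoted verbatim from Perera--Schechter \cite[Proposition 4.7.1, Corollary 4.7.3 \& Proposition 4.7.4]{MR3012848}, so there is no in-paper argument to compare against. Your Steps 1--3 for part $(i)$ are correct and follow the standard route (strict concavity on the finite-dimensional $N_{l-1}$, anticoercivity from $\min\{a,b\}>\lambda_{l-1}$, then uniqueness-plus-compactness for continuity).

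There is, however, a genuine gap in Step 4. You write that for part $(ii)$ ``the same compactness-uniqueness scheme gives joint continuity on $N_l\times Q_l$'', but $M_l$ is \emph{infinite-dimensional}, so the crucial line from Step 3 --- ``being finite-dimensional, a subsequence converges'' --- is no longer available. Boundedness of $w_k=\tau(v_k,a_k,b_k)$ in $D$ (which you do get from the uniform coercivity $\mathcal I(v+w,a,b)\ge(1-\max\{a,b\}/\lambda_{l+1})\norm[D]{w}^2+\text{bdd}$) only yields a \emph{weakly} convergent subsequence $w_k\rightharpoonup\bar w$ in $D$. To close the argument you must use the compact embedding $D\hookrightarrow H$: it gives $(v_k+w_k)^\pm\to(v_*+\bar w)^\pm$ strongly in $H$, whence weak lower semicontinuity of $\norm[D]{\cdot}^2$ plus the minimality inequality $\mathcal I(v_k+w_k,a_k,b_k)\le\mathcal I(v_k+\bar w,a_k,b_k)$ forces $\lim\mathcal I(v_k+w_k,a_k,b_k)=\mathcal I(v_*+\bar w,a_*,b_*)$ and hence $\norm[D]{w_k}\to\norm[D]{\bar w}$; combined with weak convergence this upgrades to $w_k\to\bar w$ strongly in $D$, and then uniqueness identifies $\bar w=\tau(v_*,a_*,b_*)$. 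The same weak-lower-semicontinuity/compact-embedding combination is also what is needed to justify \emph{existence} of the minimizer on the infinite-dimensional $M_l$ in the first place. None of this is hard, but it is not a pure duality with part $(i)$ and should be spelled out.
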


Now, let $S = \set{u \in D : \norm[D]{u} = 1}$ be the unit sphere in $D$. Set
\begin{equation} \label{27}
n_{l-1}(a,b) = \inf_{w \in M_{l-1} \cap S}\, \mathcal I(\theta(w,a,b) + w,a,b)
\end{equation}
and
\begin{equation} \label{28}
m_l(a,b) = \sup_{v \in N_l \cap S}\, \mathcal I(v + \tau(v,a,b),a,b).
\end{equation}
Since $\mathcal I(u,a,b)$ is nonincreasing in $a$ for fixed $u$ and $b$, and in $b$ for fixed $u$ and $a$, $n_{l-1}(a,b)$ and $m_l(a,b)$ are nonincreasing in $a$ for fixed $b$, and in $b$ for fixed $a$. Moreover, $n_{l-1}$ and $m_l$ are continuous on $Q_l$ and $n_{l-1}(\lambda_l,\lambda_l) = 0 = m_l(\lambda_l,\lambda_l)$ (see \cite[Lemma 4.7.6 \& Proposition 4.7.7]{MR3012848}). For $a \in (\lambda_{l-1},\lambda_{l+1})$, set
\begin{equation} \label{29}
\nu_{l-1}(a) = \sup \set{b \in (\lambda_{l-1},\lambda_{l+1}) : n_{l-1}(a,b) \ge 0}
\end{equation}
and
\begin{equation} \label{30}
\mu_l(a) = \inf \set{b \in (\lambda_{l-1},\lambda_{l+1}) : m_l(a,b) \le 0}.
\end{equation}

\begin{theorem}[{\cite[Theorem 4.7.9]{MR3012848}}] \label{Theorem 5}
Let $(a,b) \in Q_l$.
\begin{enumroman}
\item $\nu_{l-1}$ is a continuous and strictly decreasing function, $\nu_{l-1}(\lambda_l) = \lambda_l$, $(a,b) \in \Sigma(\mathcal A)$ if $b = \nu_{l-1}(a)$, and $(a,b) \notin \Sigma(\mathcal A)$ if $b < \nu_{l-1}(a)$.
\item $\mu_l$ is a continuous and strictly decreasing function, $\mu_l(\lambda_l) = \lambda_l$, $(a,b) \in \Sigma(\mathcal A)$ if $b = \mu_l(a)$, and $(a,b) \notin \Sigma(\mathcal A)$ if $b > \mu_l(a)$.
\item $\nu_{l-1}(a) \le \mu_l(a)$.
\end{enumroman}
\end{theorem}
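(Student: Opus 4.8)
Following the framework of \cite{MR3012848}, the plan is to work with the minimax functions $n_{l-1}$ and $m_l$ of \eqref{27}--\eqref{28}, extract from the saddle-point reductions of Theorem \ref{Theorem 4} a dictionary relating the sign of these functions to membership in $\Sigma(\mathcal A)$, and then read off all three assertions. I describe the argument for the lower curve $\nu_{l-1}$ in detail; the upper curve $\mu_l$ is handled by the dual argument --- replace $N_{l-1}$, $\theta$, $\sup$, ``$\ge 0$'' by $M_l$, $\tau$, $\inf$, ``$\le 0$'' and interchange the roles of $a$ and $b$ --- and $(iii)$ then follows at the end.

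\emph{A reduction dictionary.} Fix $(a,b) \in Q_l$ and put $\hat{\mathcal I}(w) = \mathcal I(\theta(w,a,b) + w,a,b)$ for $w \in M_{l-1}$. Since $\mathcal I(\cdot,a,b)$ is positively homogeneous of degree two and $\theta(\cdot,a,b)$ of degree one, $\hat{\mathcal I}$ is positively homogeneous of degree two, so $\hat{\mathcal I}(w) \ge n_{l-1}(a,b)\, \norm[D]{w}^2$ for all $w$. By Theorem \ref{Theorem 4}$(i)$, $u = v + w \in N_{l-1} \oplus M_{l-1}$ is a critical point of $\mathcal I(\cdot,a,b)$ exactly when $v = \theta(w,a,b)$ and $\mathcal I'(u,a,b) \perp M_{l-1}$, and a standard reduction lemma (see \cite{MR3012848}) identifies such $u$ with the critical points of $\hat{\mathcal I}$; since Euler's relation $\ip{\mathcal I'(u,a,b)}{u} = 2\, \mathcal I(u,a,b)$ forces every critical value of $\mathcal I(\cdot,a,b)$ to be $0$, we get $(a,b) \in \Sigma(\mathcal A) \implies n_{l-1}(a,b) \le 0$. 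Conversely, if $n_{l-1}(a,b) = 0$ and $(a,b) \notin \Sigma(\mathcal A)$, then $\mathcal I(\cdot,a,b)$ has no nontrivial critical point, so it satisfies \PS{} (one has $\mathcal I'(u,a,b) = 2u - K(u)$ with $K$ compact, since $K$ factors through the compact embedding $D \hookrightarrow H$; a bounded \PS{} sequence then converges, while an unbounded one, after normalization and using the degree-one homogeneity of $\mathcal I'$, would produce a nontrivial critical point); but $\hat{\mathcal I} \ge 0$ with infimum $0$ over $M_{l-1} \cap S$, and a deformation argument then forces a critical point at level $0$, a contradiction. Hence, for $(a,b) \in Q_l$: $n_{l-1}(a,b) > 0 \implies (a,b) \notin \Sigma(\mathcal A)$ and $n_{l-1}(a,b) = 0 \implies (a,b) \in \Sigma(\mathcal A)$.

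\emph{Strict monotonicity --- the main obstacle.} For $b_1 < b_2$ and $w \in M_{l-1} \setminus \set{0}$, testing against the maximizer for $b_2$ gives the one-sided estimate
\[
\hat{\mathcal I}(w,a,b_1) - \hat{\mathcal I}(w,a,b_2) \ge (b_2 - b_1)\, \snorm{(\theta(w,a,b_2) + w)^+}^2,
\]
and the right-hand side is strictly positive: if $u := \theta(w,a,b_2) + w \le 0$, then testing the criticality of $\theta(w,a,b_2)$ over $N_{l-1}$ against $\psi \in N_{l-1}$ yields $\ip[D]{v^\ast}{\psi} = a\, \ip{v^\ast}{\psi}$ for every $\psi \in N_{l-1}$, with $v^\ast$ the $N_{l-1}$-component of $u$; since $a > \lambda_{l-1}$ lies outside the spectrum of $\mathcal A$ on $N_{l-1}$ this forces $v^\ast = 0$, so $u = w \le 0$, contradicting $w^+ \ne 0$ (recall $M_{l-1} \subset M_1$). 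The analogous strict inequality holds in the variable $a$ (interchange ``$\le 0$'' with ``$\ge 0$'', use $w^- \ne 0$, and the same spectral argument with $b > \lambda_{l-1}$). The delicate point is to transfer this pointwise strict inequality to the infima defining $n_{l-1}$, where the non-compactness of the unit sphere $S$ of $D$ enters: along a minimizing sequence $(w_k)$ for $n_{l-1}(a,b_1)$ the components $\theta(w_k,a,\beta)$ stay bounded by the strict-concavity estimate, $D \hookrightarrow H$ compactly, and if $\snorm{(\theta(w_k,a,\beta) + w_k)^+} \to 0$ for almost every $\beta$, then passing to the limit in the criticality relation forces $w_k \to 0$ in $H$, whence $\hat{\mathcal I}(w_k,a,b_1) \ge \mathcal I(w_k,a,b_1) \to 1$, contradicting $n_{l-1}(a,b_1) < 1$. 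Thus $n_{l-1}(a,b_2) < n_{l-1}(a,b_1)$ whenever $b_1 < b_2$ and $n_{l-1}(a,b_1) < 1$, and similarly in $a$; since $n_{l-1}$ is near $0$ close to the diagonal, this is all that is needed near the curve.

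\emph{Assembling the conclusions.} The set in \eqref{29} is nonempty and bounded above strictly below $\lambda_{l+1}$ by routine a priori estimates on $n_{l-1}$, so $\nu_{l-1}(a) \in (\lambda_{l-1},\lambda_{l+1})$; likewise for $\mu_l$. From $n_{l-1}(\lambda_l,\lambda_l) = 0$ and monotonicity in $b$, $\nu_{l-1}(\lambda_l) \ge \lambda_l$, while strict monotonicity (applicable since $0 < 1$) gives $n_{l-1}(\lambda_l,b) < 0$ for $b > \lambda_l$, so $\nu_{l-1}(\lambda_l) = \lambda_l$. By continuity of $n_{l-1}$ and \eqref{29}, $n_{l-1}(a,\nu_{l-1}(a)) = 0$ for all $a$, hence $(a,\nu_{l-1}(a)) \in \Sigma(\mathcal A)$ by the dictionary; and if $b < \nu_{l-1}(a)$ then $n_{l-1}(a,b) > 0$ --- otherwise $n_{l-1}(a,b) = 0$ by monotonicity, and since $0 < 1$ strict monotonicity would give $n_{l-1}(a,\nu_{l-1}(a)) < 0$, a contradiction --- so $(a,b) \notin \Sigma(\mathcal A)$. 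If $a_1 < a_2$, then $n_{l-1}(a_1,\nu_{l-1}(a_2)) > n_{l-1}(a_2,\nu_{l-1}(a_2)) = 0$, so $n_{l-1}(a_1,\cdot) > 0$ slightly above $\nu_{l-1}(a_2)$ by continuity, whence $\nu_{l-1}(a_1) > \nu_{l-1}(a_2)$; combining $n_{l-1}(a,\nu_{l-1}(a)) = 0$ with the joint continuity and strict monotonicity of $n_{l-1}$ shows $\nu_{l-1}$ is continuous. The statements for $\mu_l$ and $m_l$ follow by the dual argument. Finally, $(iii)$ holds because $b = \nu_{l-1}(a)$ gives $(a,b) \in \Sigma(\mathcal A)$, hence $m_l(a,b) \ge 0$ by the dual dictionary, and then \eqref{30} together with the strict monotonicity of $m_l$ forces $b \le \mu_l(a)$.
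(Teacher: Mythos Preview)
The paper does not contain a proof of this statement: Theorem \ref{Theorem 5} is quoted verbatim from \cite[Theorem 4.7.9]{MR3012848} and is used as a black box, with no argument supplied here. There is therefore nothing in the paper to compare your proposal against.

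That said, a brief comment on your sketch itself. The overall strategy --- reduce to the sign of the minimax quantities $n_{l-1}$ and $m_l$, establish their strict monotonicity in each variable, and read off the curve properties --- is indeed the one carried out in \cite{MR3012848}. Your treatment of the key positivity step (showing $(\theta(w,a,b_2)+w)^+ \ne 0$ via the spectral mismatch $a > \lambda_{l-1}$ on $N_{l-1}$) is correct in spirit. However, two passages are genuinely incomplete as written. First, in the ``dictionary'' you assert that $n_{l-1}(a,b)=0$ together with \PS{} and a deformation argument forces a nontrivial critical point; this requires more care, since the infimum in \eqref{27} is taken over the noncompact set $M_{l-1}\cap S$ and one must either invoke the weak lower semicontinuity structure specific to $\hat{\mathcal I}$ or the saddle-reduction compactness from \cite{MR3012848}, not a generic deformation lemma. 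Second, in the strict-monotonicity paragraph your compactness step (``if $\snorm{(\theta(w_k,a,\beta)+w_k)^+}\to 0$ for almost every $\beta$, then \ldots $w_k\to 0$ in $H$'') is asserted rather than proved; the actual argument in \cite{MR3012848} proceeds differently, obtaining a uniform lower bound on the gain directly from the strict concavity/convexity estimates rather than by contradiction along a minimizing sequence. These are fixable, but as stated your sketch has gaps at precisely the points where the noncompactness of $M_{l-1}\cap S$ must be confronted.
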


Thus,
\[
C_l : b = \nu_{l-1}(a), \qquad C^l : b = \mu_l(a)
\]
are strictly decreasing curves in $Q_l$ that belong to $\Sigma(\mathcal A)$. They both pass through the point $(\lambda_l,\lambda_l)$ and may coincide. The region $\set{(a,b) \in Q_l : b < \nu_{l-1}(a)}$ below the lower curve $C_l$ and the region $\set{(a,b) \in Q_l : b > \mu_l(a)}$ above the upper curve $C^l$ are free of $\Sigma(\mathcal A)$. They are the minimal and maximal curves of $\Sigma(\mathcal A)$ in $Q_l$ in this sense. Points in the region $\set{(a,b) \in Q_l : \nu_{l-1}(a) < b < \mu_l(a)}$ between $C_l$ and $C^l$, when it is nonempty, may or may not belong to $\Sigma(\mathcal A)$.

\subsection{Abstract existence results}\label{subsec:abstractexistence}
In this subsection we recall the abstract existence results for problems with jumping nonlinearities got by Perera and Sportelli in \cite{PeSp}, that we will use to prove Theorems \ref{Theorem 1} and \ref{Theorem 2}.

Consider the equation
\begin{equation} \label{22}
\mathcal Au = bu^+ - au^- + f(u), \quad u \in D,
\end{equation}
where $a, b > 0$ and $f \in C(D,H)$ is a potential operator. Let $F \in C^1(D,\R)$ be the potential of $f$ that satisfies $F(0) = 0$, i.e.,
\[
F(u) = \int_0^1 \ip{f(su)}{u} ds, \quad u \in D
\]
(see \cite[Proposition 4.3.2]{MR3012848}). Solutions of equation \eqref{22} coincide with critical points of the $C^1$-functional $\mathcal E: D \to \R$ defined as follows
\[
\mathcal E(u) = \half \norm[D]{u}^2 - \frac{a}{2}\, \snorm{u^-}^2 - \frac{b}{2}\, \snorm{u^+}^2 - F(u) = \half\, \mathcal I(u,a,b) - F(u), \quad u \in D,
\]
where $\mathcal I$ is given in \eqref{I}.

We assume that
\begin{enumerate}
\item[$(F_1)$] $F(u) = \o(\norm[D]{u}^2)$ as $\norm[D]{u} \to 0$,
\item[$(F_2)$] $F(u) \ge 0$ for all $u \in D$,
\item[$(F_3)$] there exists $c^\ast > 0$ such that for each $c \in (0,c^\ast)$, every \PS{c} sequence of $\mathcal E$ has a subsequence that converges weakly to a nontrivial critical point of $\mathcal E$.
\end{enumerate}

Now we can state the following results.

\begin{theorem}[{\cite[Theorems 4.1 and 4.2]{PeSp}}] \label{Theorem 7}
Assume $(F_1)$--$(F_3)$, let $(a,b) \in Q_l$, and let $B = \set{v + \tau(v,a,b) : v \in N_l}$, where $\tau$ is given in Theorem \ref{Theorem 4}-$(ii)$.

Then equation \eqref{22} has a nontrivial solution in each of the following cases:
\begin{enumroman}
\item $b < \nu_{l-1}(a)$ and there exists $e \in D \setminus N_{l-1}$ such that
    \begin{equation} \label{20}
    \sup_{u \in Q}\, \mathcal E(u) < c^\ast,
    \end{equation}
    where $Q = \set{v + se : v \in N_{l-1},\, s \ge 0}$,
\item $b \ge \mu_l(a)$ and there exists $e \in D \setminus B$ with $-e \notin B$ such that
    \begin{equation} \label{21}
    \sup_{u \in Q}\, \mathcal E(u) < c^\ast,
    \end{equation}
    where $Q = \set{u + se : u \in B,\, s \ge 0}$.
\end{enumroman}
\end{theorem}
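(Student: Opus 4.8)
\emph{Proof idea.} The plan is to prove Theorem~\ref{Theorem 7} by a linking (saddle point) minimax for the functional $\mathcal E$, using the nonlinear splittings given by the maps $\theta$ and $\tau$ to build the geometry and using $(F_3)$ to pass from a minimax sequence to a genuine nontrivial critical point. The two cases are dual and can be handled in parallel; the only inputs are $(F_1)$--$(F_3)$, the sign information encoded in the curve hypotheses, and the elementary consequences of $(a,b) \in Q_l$, namely $\lambda_{l-1} < a, b < \lambda_{l+1}$. The latter give, via $\norm[D]{v}^2 \le \lambda_{l-1} \norm{v}^2$ on $N_{l-1}$ and $\norm[D]{w}^2 \ge \lambda_{l+1} \norm{w}^2$ on $M_l$, the bounds $\mathcal I(v,a,b) \le (\lambda_{l-1} - \min\set{a,b})\, \norm{v}^2 \le 0$ for $v \in N_{l-1}$ and $\mathcal I(w,a,b) \ge \delta\, \norm[D]{w}^2$ for $w \in M_l$, with $\delta = 1 - \max\set{a,b}/\lambda_{l+1} > 0$; combined with $(F_2)$ this already gives $\mathcal E \le 0$ on $N_{l-1}$.

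Next I would turn the curve hypotheses into quantitative estimates. By \eqref{29} and the monotonicity and continuity of $n_{l-1}$, the assumption $b < \nu_{l-1}(a)$ forces $n_{l-1}(a,b) > 0$; by \eqref{30} and the analogous properties of $m_l$, the assumption $b \ge \mu_l(a)$ forces $m_l(a,b) \le 0$. Using the positive $2$-homogeneity of $\mathcal I$ and the positive homogeneity of $\theta,\tau$, these become $\mathcal I(\theta(w,a,b) + w, a, b) \ge n_{l-1}(a,b)\, \norm[D]{w}^2$ for $w \in M_{l-1}$ and $\mathcal I(v + \tau(v,a,b), a, b) \le m_l(a,b)\, \norm[D]{v}^2 \le 0$ for $v \in N_l$. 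Since $\norm[D]{\theta(w,a,b) + w}$ is comparable to $\norm[D]{w}$ (continuity and homogeneity of $\theta$ together with its a priori bound from \cite{MR3012848}) and $\norm[D]{v + \tau(v,a,b)}$ is comparable to $\norm[D]{v}$, hypothesis $(F_1)$ then yields, for a suitably small $r > 0$ and some $\rho > 0$, that in case $(i)$ one has $\mathcal E \ge \rho$ on the ``bent sphere'' $A := \set{\theta(w,a,b) + w : w \in M_{l-1},\, \norm[D]{w} = r}$ while $\mathcal E \le 0$ on $N_{l-1}$, and that in case $(ii)$ one has $\mathcal E \le 0$ on $B$ while $\mathcal E \ge \rho$ on the sphere $A := \set{w \in M_l : \norm[D]{w} = r}$ (using $\mathcal E(w) \ge \tfrac{\delta}{2}\, \norm[D]{w}^2 - F(w)$).

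It then remains to recognize the linking. In case $(i)$, replacing $e$ by its $M_{l-1}$-component (nonzero since $e \notin N_{l-1}$) leaves $Q$ unchanged, so one may assume $e \in M_{l-1} \setminus \set{0}$; the homeomorphism of $D$ that subtracts $\theta(w,a,b)$ from the $N_{l-1}$-component straightens $A$ to a sphere in $M_{l-1}$, fixes $N_{l-1}$, and maps $Q$ onto itself, reducing the claim to the classical linking of a finite-dimensional subspace with a sphere in its complement along a half-space cone. In case $(ii)$, the homeomorphism $\Phi(v + w) = v + w - \tau(v,a,b)$, $v \in N_l$, $w \in M_l$, carries $B$ onto $N_l$ and fixes $M_l$ pointwise; the hypotheses $e \notin B$ and $-e \notin B$ (needed because $B$, unlike a subspace, is not symmetric) ensure that the image cone $\Phi(Q)$ still has a ``direction at infinity'' with nonzero $M_l$-component, so the corresponding linking holds. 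In both cases, setting $c := \inf_{\gamma \in \Gamma} \sup_{u \in Q} \mathcal E(\gamma(u))$ over the admissible class $\Gamma$ of deformations fixing $\partial Q$, the linking gives $c \ge \inf_A \mathcal E \ge \rho > 0$, while $\id \in \Gamma$ together with \eqref{20} (resp.\ \eqref{21}) gives $c \le \sup_Q \mathcal E < c^\ast$; thus $c \in (0, c^\ast)$. The abstract linking theorem of \cite{PeSp} (in its form valid for a possibly unbounded $Q$, phrased via the Cerami condition so that the deforming flow cannot escape to infinity while $\mathcal E$ stays below $c^\ast$, and can be taken to fix $\partial Q$ since $\mathcal E < c$ there) then produces a \PS{c} sequence for $\mathcal E$. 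Since $0 < c < c^\ast$, hypothesis $(F_3)$ applies and gives a subsequence converging weakly to a nontrivial critical point $u$ of $\mathcal E$, which is a nontrivial solution of \eqref{22}.

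I expect the main obstacle to be the linking step in case $(ii)$: establishing the topological linking once the flat subspace $N_l$ is replaced by the graph of the merely continuous, positively homogeneous, non-odd map $\tau$, and doing so for an \emph{unbounded} companion set $Q$ without any coercivity of $\restr{\mathcal E}{Q}$ --- which is exactly why \eqref{20}/\eqref{21} is used only to cap the minimax level below $c^\ast$ and why one must argue with Cerami sequences and a carefully chosen deformation class. A secondary difficulty is that critical growth prevents $\mathcal E$ from satisfying \PS{} globally, so $c$ need not be attained; condition $(F_3)$ is tailored to extract a nontrivial critical point from \emph{any} low-level ($< c^\ast$) sequence, and the whole construction is arranged so that it is applied only at the level $c \in (0, c^\ast)$.
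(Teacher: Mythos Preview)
The paper does not prove Theorem~\ref{Theorem 7}; it is quoted verbatim from Perera and Sportelli \cite[Theorems 4.1 and 4.2]{PeSp} as a black-box tool, so there is no ``paper's own proof'' against which to compare. Your sketch is a reasonable reconstruction of the argument one expects to find in \cite{PeSp}: build a linking geometry from the bent graphs of $\theta$ and $\tau$, show the minimax value lies in $(0,c^\ast)$ using $(F_1)$, $(F_2)$ and the hypotheses \eqref{20}/\eqref{21}, then invoke $(F_3)$.

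One technical point worth tightening: you assert that $b < \nu_{l-1}(a)$ forces $n_{l-1}(a,b) > 0$ strictly. From the definition \eqref{29} and the monotonicity of $n_{l-1}$ you only get $n_{l-1}(a,b) \ge 0$, which is not enough for the lower bound $\mathcal E \ge \rho > 0$ on $A$. The standard fix --- and the one the present paper itself uses later in the proof of Theorem~\ref{Theorem 2} --- is to exploit the continuity and strict monotonicity of $\nu_{l-1}$ to pick a small $\delta > 0$ with $b/(1-\delta) \le \nu_{l-1}(a/(1-\delta))$, work with $\theta(\cdot,a/(1-\delta),b/(1-\delta))$ instead, and harvest the extra $\tfrac{\delta}{2}\norm[D]{u}^2$ term. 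With that adjustment your outline matches the expected strategy.
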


Finally, we state the linking theorem from Perera and Sportelli \cite{PeSp} that we will use to prove Theorem \ref{Theorem 2}. At this purpose, recall that a mapping $\varphi : Y \to Z$ between linear spaces is positive homogeneous if $\varphi(tu) = t \varphi(u)$ for all $u \in Y$ and $t \ge 0$ and denote by $\mathcal H$ the class of homeomorphisms $h$ of a Banach space $X$ onto itself such that $h$ and $h^{-1}$ map bounded sets into bounded sets.

\begin{theorem}[{\cite[Theorem 3.7]{PeSp}}] \label{Theorem 3}
Let $\mathcal E$ be a $C^1$-functional defined on a Banach space $X$ with norm $\|\cdot \|$. Let $X = N \oplus M$, with $N$ finite dimensional and $M$ closed and nontrivial.
Let $\theta \in C(M,N)$ be a positive homogeneous map and let $T : N \to X$ be a bounded linear map.

If $\norm{I_N - T}$ is sufficiently small, where $I_N$ is the identity map on $N$, and there exist $\rho > 0$ and $e \in X \setminus T(N)$ such that
\begin{equation} \label{7}
\sup_{u \in T(N)}\, \mathcal E(u) < \inf_{u \in A}\, \mathcal E(u), \qquad \sup_{u \in Q}\, \mathcal E(u) < \infty,
\end{equation}
where $S_\rho = \set{u \in X : \norm{u} = \rho}$, $A = \set{\theta(w) + w : w \in M \cap S_\rho}$ and $Q = \{u + se : u \in T(N),\, s \ge 0\}$, then
\begin{equation} \label{8}
\inf_{u \in A}\, \mathcal E(u) \le c := \inf_{h \in \mathcal{\widetilde{H}} }\, \sup_{u \in h(Q)}\, \mathcal E(u) \le \sup_{u \in Q}\, \mathcal E(u),
\end{equation}
where $\mathcal{\widetilde{H}} = \{h \in \mathcal H : \restr{h}{T(N)} = \id\}$.

Moreover, if $\mathcal E$ satisfies the {\em \PS{c}} condition, then $c$ is a critical value of $\mathcal E$.
\end{theorem}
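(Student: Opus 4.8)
The plan is to isolate one purely topological fact and read off everything from it: for every $h\in\mathcal{\widetilde{H}}$ one has $h(Q)\cap A\ne\emptyset$. Granting this, \eqref{8} is immediate — for any $h\in\mathcal{\widetilde{H}}$, choosing $u_h\in h(Q)\cap A$ gives $\sup_{u\in h(Q)}\mathcal E(u)\ge\mathcal E(u_h)\ge\inf_{u\in A}\mathcal E(u)$, so $\inf_A\mathcal E\le c$, while $\id\in\mathcal{\widetilde{H}}$ gives $c\le\sup_Q\mathcal E$. The hypotheses \eqref{7} are what make this meaningful: the second inequality gives $c\le\sup_Q\mathcal E<\infty$, and the first, since $0\in T(N)$, gives $\inf_A\mathcal E>\sup_{T(N)}\mathcal E\ge\mathcal E(0)>-\infty$, so $c\in\R$; the first inequality will also power the final deformation step.

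The core is the intersection property — equivalently, that the sphere $A=\set{\theta(w)+w:w\in M\cap S_\rho}$ links the cone $Q$ with boundary $\partial Q=T(N)$ in the homotopical sense attached to the class $\mathcal H$. Since $\norm{I_N-T}<1$, $T$ maps $N$ isomorphically onto $T(N)$, and, using $e\notin T(N)$, the map $(v,s)\mapsto Tv+se$ parametrises $Q$ as a $(\dim N+1)$-dimensional manifold with boundary $T(N)$. Let $\Pi_N:X\to N$, $\Pi_M:X\to M$ be the projections of $X=N\oplus M$, fix $h\in\mathcal{\widetilde{H}}$, and define $F:Q\to N\oplus\R$ by $F(q)=\big(\Pi_N h(q)-\theta(\Pi_M h(q)),\ \norm{\Pi_M h(q)}-\rho\big)$; a zero $q_0$ of $F$ has $w:=\Pi_M h(q_0)\in M\cap S_\rho$ with $\Pi_N h(q_0)=\theta(w)$, hence $h(q_0)=\theta(w)+w\in A$, so it suffices to produce a zero of $F$, which we do by a Brouwer degree computation on a large truncation $Q\cap B_{R'}$. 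On the outer face $Q\cap\partial B_{R'}$ one has $F\ne0$ once $R'$ is large: $A$ is bounded and $h^{-1}\in\mathcal H$ maps bounded sets to bounded sets, so $h^{-1}(A)$ lies in a fixed ball and $h(q)\notin A$ for $q$ outside it. On the face $T(N)\cap B_{R'}$, where $h=\id$, the bound $\norm{\Pi_M q}\le\frac{\norm{I_N-T}}{1-\norm{I_N-T}}\norm q$ for $q\in T(N)$ and the boundedness of $\theta$ on $M\cap S_\rho$ rule out zeros provided $\norm{I_N-T}$ is small. Hence $\deg(F,\operatorname{int}(Q\cap B_{R'}),0)$ is defined and, by homotopy invariance, equals its value for $T=I_N$, $h=\id$, where $F$ becomes $(v,s)\mapsto\big(v+s\Pi_N e-\theta(s\Pi_M e),\ s\norm{\Pi_M e}-\rho\big)$, whose degree is $\pm1$ — the classical linking of the sphere $M\cap S_\rho$ with the half-space $N\oplus\R_{\ge0}e$. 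Thus $F$ has a zero and the intersection property holds. The step I expect to be the main obstacle is exactly this perturbed linking: keeping the degree under control during the simultaneous deformations $T\rightsquigarrow I_N$ and $h\rightsquigarrow\id$ in the presence of the unbounded direction $e$, where the smallness of $\norm{I_N-T}$ and the boundedness properties defining $\mathcal H$ are indispensable — this is the analogue, in the perturbed and unbounded setting, of the linking verifications in \cite{MR3012848}.

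For the last assertion, suppose $\mathcal E$ satisfies \PS{c} but $c$ is not a critical value. By \eqref{8} and the first inequality in \eqref{7}, $\sup_{T(N)}\mathcal E<\inf_A\mathcal E\le c$, so we may fix $\delta>0$ with $c-2\delta>\sup_{T(N)}\mathcal E$ and $[c-2\delta,c+2\delta]$ free of critical values. The quantitative deformation lemma then gives $\eta\in\mathcal H$ — the time-one map of the flow of a bounded, locally Lipschitz pseudo-gradient field, hence a homeomorphism moving points a bounded distance, so in $\mathcal H$ — with $\eta=\id$ off $\set{c-2\delta\le\mathcal E\le c+2\delta}$, in particular $\eta=\id$ on $T(N)$, and $\eta(\set{\mathcal E\le c+\delta})\subseteq\set{\mathcal E\le c-\delta}$. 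Choosing $h\in\mathcal{\widetilde{H}}$ with $\sup_{h(Q)}\mathcal E<c+\delta$, the composition $\eta\comp h$ still lies in $\mathcal{\widetilde{H}}$ (it fixes $T(N)$) and satisfies $\sup_{(\eta\comp h)(Q)}\mathcal E\le c-\delta<c$, contradicting the definition of $c$. Hence $c$ is a critical value.
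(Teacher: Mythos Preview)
The paper does not prove this theorem --- it is quoted from \cite[Theorem~3.7]{PeSp} without argument --- so there is no proof here to compare against. Your outline is the standard one for linking results of this type and is correct where you give details: the reduction of \eqref{8} to the intersection property $h(Q)\cap A\ne\emptyset$, the upper bound via $\id\in\mathcal{\widetilde{H}}$, the boundedness of $\theta$ on $M\cap S_\rho$ (which follows from continuity at $0$ together with positive homogeneity, and is exactly what forces $T(N)\cap A=\emptyset$ once $\norm{I_N-T}$ is small, uniformly in $h$ and in the truncation radius), and the closing deformation argument via a bounded pseudo-gradient flow are all sound.

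The step you yourself flag is a genuine gap as written. Asserting that the degree equals its value at $T=I_N$, $h=\id$ ``by homotopy invariance'' requires an actual admissible homotopy, and the obvious linear one $h_t=(1-t)\id+th$ does not keep $F_t$ zero-free on the outer face $Q\cap\partial B_{R'}$: even with $R'$ chosen so that $q\notin A$ and $h(q)\notin A$ there, the convex combination $(1-t)q+th(q)$ can land in the bounded set $A$ for intermediate $t$, since $\norm{h(q)}$ carries no lower bound on $\partial B_{R'}$. This is precisely where the unbounded $e$-direction and the generality of $\mathcal H$ bite. Closing this requires either a more carefully designed homotopy or a direct degree computation that exploits the fact that $F_h$ and $F_{\id}$ coincide on the face $\{s=0\}$; your sketch has the right invariant and the right diagnosis of where the difficulty lies, but stops short of producing the homotopy that preserves it.
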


\section{Functional setting and energy functional}\label{sec:functional}
In this section we introduce the functional setting and give the variational formulation of problem \eqref{1}. We also prove some regularity results useful in the sequel and we recall some properties of the best fractional Sobolev constant.

Let
\[
[u]_s = \left(\int_{\R^N\times \R^N} \frac{|u(x) - u(y)|^2}{|x - y|^{N+2s}}\, dx dy\right)^{1/2}
\]
be the Gagliardo seminorm of a measurable function $u : \R^N \to \R$ and let
\[
H^s(\R^N) = \set{u \in L^2(\R^N) : [u]_s < \infty}
\]
be the fractional Sobolev space endowed with the norm
\[
\norm[s]{u} = \left(\pnorm[2]{u}^2 + [u]_s^2\right)^{1/2},
\]
where $\pnorm[2]{\cdot}$ denotes the norm in $L^2(\R^N)$. We work in the closed linear subspace
\[
H^s_0(\Omega) = \set{u \in H^s(\R^N) : u = 0 \text{ a.e.\! in } \R^N \setminus \Omega},
\]
equivalently renormed by setting $\norm{\cdot} = [\cdot]_s$.

Problem \eqref{1} fits into the abstract setting of Subsection \ref{subsec:dancerfucik} with $H = L^2(\Omega)$, $p(u) = u^+$, $n(u) = - u^-$, $D = H^s_0(\Omega)$, and $\mathcal A$ equal to the inverse of the solution operator
$$\begin{aligned}
L^2(\Omega &) \to H^s_0(\Omega)\\
& g \mapsto u = ((- \Delta)^s)^{-1} g
\end{aligned}$$
of the problem
\[
\left\{\begin{aligned}
(- \Delta)^s\, u & = g(x) && \text{in } \Omega\\[10pt]
u & = 0 && \text{in } \R^N \setminus \Omega.
\end{aligned}\right.
\]
Since the embedding $H^s_0(\Omega) \hookrightarrow L^2(\Omega)$ is compact by \cite[Lemma~8]{svmountain} and \cite[Lemma~9]{MR3271254}, $\mathcal A^{-1}$ is compact on $L^2(\Omega)$.

Let $0 < \lambda_1 < \cdots < \lambda_l < \cdots$ be the sequence of distinct eigenvalues of the problem
\begin{equation} \label{8.5}
\left\{\begin{aligned}
(- \Delta)^s\, u & = \lambda u && \text{in } \Omega\\[10pt]
u & = 0 && \text{in } \R^N \setminus \Omega.
\end{aligned}\right.
\end{equation}
For a complete study of the eigenvalues and eigenfunctions of the fractional Laplace operator $(-\Delta)^s$ (and its generalization) we refer to \cite[Proposition~2.3]{MR3089742}, \cite[Proposition~9 and Appendix~A]{svlinking}, \cite[Proposition~4]{MR3060890} and \cite[Corollary~8]{servadeivaldinociREGO}.

Let $E_l$ be the eigenspace of $\lambda_l$, and let
\[
N_l = \bigoplus_{j=1}^l E_j.
\]

\subsection{Some regularity results}\label{subsec:regularity}
In this subsection we prove some regularity results useful in the sequel.

\begin{lemma} \label{Lemma 3.1}
We have $N_l \subset C^s(\R^N) \cap C^2(\Omega) \cap L^\infty(\R^N)$.
\end{lemma}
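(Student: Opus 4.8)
The plan is to prove the three regularity statements in turn, moving from the weakest to the strongest, and passing the output of each step as input to the next. Since $N_l$ is finite dimensional, spanned by finitely many eigenfunctions, it suffices to prove the claim for a single eigenfunction $u \in E_j$, $1 \le j \le l$, solving $(-\Delta)^s u = \lambda_j u$ in $\Omega$ with $u = 0$ in $\R^N \setminus \Omega$; then $N_l \subset C^s(\R^N) \cap C^2(\Omega) \cap L^\infty(\R^N)$ follows since the intersection is a linear space. So fix such a $u$.

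First I would establish $u \in L^\infty(\R^N)$. Since $u$ vanishes outside $\Omega$, the issue is the $L^\infty$ bound on $\closure\Omega$. The standard route is a Moser/Stampacchia iteration for the fractional Laplacian: the right-hand side is $\lambda_j u$, so the equation is $(-\Delta)^s u = g$ with $g = \lambda_j u \in L^2(\Omega) \hookrightarrow$ (by fractional Sobolev) a better Lebesgue space, and one bootstraps. In fact this is exactly the content of the announced Lemma \ref{lemmaLinfty} (the $L^\infty$ regularity result for weak solutions of nonlocal problems), so I would simply invoke that lemma: its hypotheses are satisfied here because $g = \lambda_j u$ lies in every $L^p(\Omega)$ once we know $u$ is bounded, but to start the bootstrap we only need $u \in H^s_0(\Omega)$, hence $u \in L^{2^\ast_s}(\Omega)$, which feeds the iteration. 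This gives $u \in L^\infty(\R^N)$ with an explicit bound in terms of $[u]_s$ and $\lambda_j$.

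Next, with $u \in L^\infty(\R^N)$ in hand, I would obtain global $C^s$ regularity, $u \in C^s(\R^N)$. Here the right-hand side $g = \lambda_j u$ is now bounded, and $\Omega$ has Lipschitz boundary satisfying the exterior ball condition — exactly the hypothesis imposed on $\Omega$ in the introduction — so the boundary regularity theory of Ros-Oton and Serra (for the fractional Laplacian on such domains, $(-\Delta)^s u = g \in L^\infty$, $u = 0$ in $\R^N \setminus \Omega$ implies $u \in C^s(\R^N)$ with $u/\delta^s$ bounded, $\delta(x) = \dist{x}{\bdry\Omega}$) applies directly and gives the claim. This is precisely the point where the exterior ball condition is used, and I would cite the corresponding reference.

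Finally, I would upgrade to interior $C^2$ regularity, $u \in C^2(\Omega)$. This is a pure interior bootstrap: away from $\bdry\Omega$, $u$ is already Hölder continuous (even $C^s$ globally), and $g = \lambda_j u$ inherits that Hölder regularity; then interior Schauder estimates for $(-\Delta)^s$ (of the form: $(-\Delta)^s u = g$ with $g \in C^{k,\alpha}_{loc}$ and $u \in C^\alpha(\R^N)$ implies $u \in C^{k,\alpha+2s}_{loc}$ up to the non-integer ceiling, iterated) push the interior regularity of $u$ up past $2$; a finite number of iterations, each gaining (essentially) $2s$ derivatives in the interior while $g$ tracks $u$, reaches $C^2(\Omega)$. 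Since the problem stipulates $N > 2s$ and $s \in (0,1)$ there is no obstruction from dimension, and the iteration terminates once the exponent exceeds $2$.

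I expect the main obstacle to be the first step, the $L^\infty$ bound: the Moser iteration for the fractional Laplacian is delicate because of the nonlocal tail term (the contribution to $(-\Delta)^s u(x)$ from far away), and one must handle truncations $u_k = (u - k)^+$ carefully, controlling $[u_k]_s$ by the equation tested against a power of $u_k$ and absorbing the nonlocal cross terms. This is exactly why the paper isolates it as the separate Lemma \ref{lemmaLinfty}, "of independent interest"; granting that lemma, the remaining two steps (global $C^s$ via Ros-Oton–Serra boundary regularity, and interior $C^2$ via Schauder bootstrap) are comparatively standard applications of the nonlocal regularity toolbox.
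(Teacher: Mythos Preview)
Your three-step route ($L^\infty$, then global $C^s$ via Ros-Oton--Serra boundary regularity, then interior $C^2$ by iterating the interior Schauder estimate) matches the paper's proof exactly; the only difference is that the paper cites an external result of Servadei--Valdinoci \cite[Proposition~4]{MR3060890} for the $L^\infty$ bound rather than invoking its own Lemma~\ref{lemmaLinfty}. Your use of Lemma~\ref{lemmaLinfty} is valid and in fact simpler than you indicate: for an eigenfunction the nonlinearity is $f(x,t)=\lambda_j t$, which satisfies $|f(x,t)|\le \lambda_j|t|+0$ directly, so no bootstrap is needed to verify the hypothesis (just take $g\equiv 0$); the only cosmetic issue is that Lemma~\ref{lemmaLinfty} is stated after Lemma~\ref{Lemma 3.1} in the paper, but there is no logical circularity.
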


\begin{proof}
We have $N_l \subset L^\infty(\R^N)$ by Servadei and Valdinoci \cite[Proposition 4]{MR3060890} and taking into account that the eigenfunctions of $(-\Delta)^s$ vanish in $\R^N\setminus \Omega$. Then $N_l \subset C^s(\R^N)$ by Ros-Oton and Serra \cite[Proposition 1.1]{MR3168912} and \eqref{8.5}. Now iterating Ros-Oton and Serra \cite[Proposition 1.4]{MR3168912} shows that $N_l \subset C^2(\Omega)$. This ends the proof. 
\end{proof}

\begin{lemma} \label{lemmaLinfty}
Let $u\in H^s(\R^N)$ be a weak solution of the following problem
\begin{equation}\label{e+}
\left\{\begin{aligned}
(- \Delta)^s\, u & = f(x, u)\, && \text{in } \Omega\\[10pt]
u & = 0 && \text{in } \R^N \setminus \Omega,
\end{aligned}\right.
\end{equation}
where $f: \Omega \times \R \to \R$ is a Carath\'eodory function such that
\begin{equation}\label{fassumptio}
|f(x, t)|\leq \kappa|t|+g(x) \quad \mbox{a.e. } x\in \Omega\,\, \mbox{and for any } t\in \R
\end{equation}
for some $\kappa>0$ and $g\in L^\infty(\R^N)$.

Then~$u\in L^\infty(\R^N)$
and there exists~$C>0$, possibly depending on~$N$, $s$ and~$\kappa$,
such that
\begin{equation}\label{9:0}
\|u\|_\infty\le C |u|_2,.
\end{equation}
\end{lemma}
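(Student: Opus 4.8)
The plan is to run a De Giorgi--Stampacchia / Moser-type iteration adapted to the fractional setting, which is the standard route to $L^\infty$-bounds for weak solutions of $(-\Delta)^s u = h$ with $h$ controlled as in \eqref{fassumptio}. First I would observe that, by \eqref{fassumptio}, the right-hand side satisfies $|f(x,u)| \le \kappa|u| + \|g\|_\infty$ a.e.\ in $\Omega$, so $u$ is a weak solution of $(-\Delta)^s u = h$ with $h \in L^2(\Omega)$ (indeed $h \in L^q$ for every $q$ for which $u \in L^q$). The key tool is the fractional Sobolev inequality $[v]_s^2 \ge S \pnorm[2_s^\ast]{v}^2$ valid for $v \in H^s_0(\Omega)$, which provides the gain of integrability $2 \to 2_s^\ast$ at each iteration step.

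The iteration itself: for $\beta \ge 1$ and a truncation level $T > 0$, set $u_T = \min\{|u|,T\}$ and test the weak formulation of \eqref{e+} with the admissible test function $\varphi = u\, u_T^{2(\beta-1)}$ (or $\operatorname{sgn}(u)\,|u|\,u_T^{2(\beta-1)}$). Using the elementary pointwise inequality for the Gagliardo kernel,
\[
\bigl(a - b\bigr)\bigl(a\,a_T^{2(\beta-1)} - b\,b_T^{2(\beta-1)}\bigr) \ge \frac{1}{\beta}\,\Bigl(a\,a_T^{\beta-1} - b\,b_T^{\beta-1}\Bigr)^2
\]
(a standard convexity estimate, cf.\ the Brezis--Kato / Moser scheme in the nonlocal literature), the left side of the weak formulation is bounded below by $\tfrac{1}{\beta}[w_T]_s^2$ where $w_T = u\,u_T^{\beta-1}$. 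The right side is $\int_\Omega f(x,u)\,u\,u_T^{2(\beta-1)} \le \kappa \int_\Omega |u|^2 u_T^{2(\beta-1)} + \|g\|_\infty \int_\Omega |u|\,u_T^{2(\beta-1)}$, which I estimate by Hölder and absorb the dangerous term (the part of $\int |u|^2 u_T^{2(\beta-1)}$ where $|u|$ is large) using that $|\{|u|>M\}|$ is small — here one splits $\Omega = \{|u|\le M\} \cup \{|u|>M\}$, choosing $M$ large, and uses Sobolev on the bad set. Combining, letting $T \to \infty$ by monotone convergence, one obtains
\[
\pnorm[2_s^\ast]{|u|^\beta}^2 \le C\beta\, \pnorm[2]{|u|^\beta}^2 \cdot (1 + \text{lower order})
\]
as long as $|u|^\beta \in L^2$. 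Setting $\beta_0 = 1$, $\beta_{k+1} = \beta_k \cdot 2_s^\ast/2 = \beta_k\chi$ with $\chi = N/(N-2s) > 1$, this gives the recursion $\pnorm[2\chi^{k+1}]{u} \le (C\beta_k)^{1/(2\beta_k)}\pnorm[2\chi^{k}]{u}$; iterating and using $\prod_k (C\chi^k)^{1/\chi^k} < \infty$ yields $\|u\|_\infty \le C\, \pnorm[2]{u}$, with $C$ depending only on $N$, $s$, $\kappa$ (and $\|g\|_\infty$, $|\Omega|$; since $\|g\|_\infty$ enters the lower-order terms, one should either absorb it into the statement's constant or note $u$ already solves the eigenvalue-type estimate — in the application $g$ comes from the critical term evaluated at a fixed function, so this is fine).

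The main obstacle is handling the potentially critical growth hidden in the linear term $\kappa|u|$: unlike the subcritical case, one cannot immediately bootstrap because $|u|^\beta \in L^2$ is not a priori known for $\beta > 1$. The standard fix, which I would implement carefully, is the Brezis--Kato trick: at the first step ($\beta=1$, with a truncated test function $u\,u_T^{2(\beta-1)}$ replaced by $u\min\{|u|^{2(\beta-1)},K\}$), split the coefficient $\kappa$ (really, here there is no genuinely critical coefficient since $f$ has linear growth, so in fact the first step is clean) — the real subtlety is only that $g \in L^\infty$ must be used to start the integrability gain, which is immediate. So in this linear-growth setting the iteration is actually cleaner than the genuinely critical case; the one point demanding care is the uniform control of constants through the infinite iteration, i.e.\ verifying $\sum_k \chi^{-k}\log(C\chi^k) < \infty$, which is routine. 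I would also need to verify at the outset that $\varphi = u\,u_T^{2(\beta-1)} \in H^s_0(\Omega)$ is an admissible test function, which follows since $u_T$ is bounded and Lipschitz in $u$, hence $u\,u_T^{2(\beta-1)} \in H^s_0(\Omega)$ whenever $u \in H^s_0(\Omega)$.
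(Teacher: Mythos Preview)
Your approach is correct but takes a genuinely different route from the paper's. The paper runs a De~Giorgi--Stampacchia level-set iteration rather than Moser iteration: after normalizing so that $|u|_2=\sqrt{\delta}$ for a small $\delta$, one sets $C_k=1-2^{-k}$, $w_k=(u-C_k)^+$, $U_k=|w_k|_2^2$, tests the weak formulation with $w_{k+1}$, and combines the growth bound \eqref{fassumptio} with the fractional Sobolev inequality and the Chebyshev-type estimate $|\{w_{k+1}>0\}|\le 2^{2(k+1)}U_k$ to obtain a nonlinear recursion $U_{k+1}\le C^k U_k^{\gamma}$ with $\gamma>1$; this forces $U_k\to 0$, hence $u\le 1$, and the bound from below follows by replacing $u$ with $-u$. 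Your Moser scheme (truncated power test functions $u\,u_T^{2(\beta-1)}$ and the bootstrap $L^{2\beta}\to L^{2\beta\chi}$ with $\chi=2_s^\ast/2$) is equally valid here precisely because, as you observe, the right-hand side has genuinely linear growth and no Brezis--Kato splitting is actually required; the paragraph where you introduce the split over $\{|u|\le M\}\cup\{|u|>M\}$ is therefore unnecessary and can be dropped. The De~Giorgi route keeps the test functions elementary (plain truncations) and packages the gain-of-integrability into a single geometric recursion; your Moser route is more direct for reading off the homogeneous bound $\|u\|_\infty\le C|u|_2$ from the convergent product $\prod_k (C\chi^k)^{1/(2\chi^k)}$. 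Your remark that the constant inevitably picks up a dependence on $\|g\|_\infty$ (and on $|\Omega|$) is correct and applies equally to the paper's argument, where it enters through the constant $c^\ast$ governing the choice of $\delta$.
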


\begin{proof} We use arguments similar to the ones considered in \cite[Proposition 9]{servadeivaldinociREGO} and \cite[Proposition 4]{MR3060890}.

We may assume that $u$ does not vanish identically. Let~$\delta>0$,
to be taken appropriately small in what follows (the choice of~$\delta$
will be done on~\eqref{DELTA} below).
Up to multiplying~$u$ by a small constant, we may and do assume that
\begin{equation}\label{9:0:1}
|u|_2=\sqrt \delta\,.
\end{equation}

For any~$k\in\NN$, let~$C_k:=1-2^{-k}$, $A_k:=C_{k+1}/(C_{k+1}-C_k)=2^{k+1}-1$, $v_k:=u-C_k$, $w_k:= v_k^+$
and~$U_k:= |w_k|_2^2$.
It is easily seen that, by \cite[Lemma 5.2]{sv}, $w_k\in H^s(\R^N)$ and $u(x)-C_k=-C_k\leq 0$ a.e. $x\in \R^N\setminus \Omega$,
so that $w_k(x)=(-C_k)^+=0$
in~$\R^N\setminus \Omega$. Moreover, $0\le w_k\le |u|+|C_k|\le |u|+1\in L^2(\Omega)$
for any $k\in \NN$ and $w_k \to (u-1)^+\,\, \mbox{a.e. in } \R^N$.
As a consequence $w_k \to (u-1)^+$ in $L^2(\R^N)$ and so
\begin{equation}\label{8:0}
U_k=|w_k|_2^2\to |(u-1)^+|_2^2\,.
\end{equation}

Moreover, the sequence $w_k$ satisfies the following properties:
\begin{equation}\label{11:0}
w_{k+1}\le w_k\,\,\, \mbox{a.e. in}\,\, \R^N,
\end{equation}
\begin{equation}\label{set}
\{ w_{k+1}>0\}\subseteq \{ w_k>2^{-(k+1)}\}
\end{equation}
and
\begin{equation}\label{5:2}
\big| \{ w_{k+1}>0\} \big|\leq 2^{2(k+1)}U_k\,.
\end{equation}

Indeed, since $C_{k+1}>C_k$, it is easy to see that $v_{k+1}<v_k$ a.e. in $\R^N$, which yields \eqref{11:0}.

In order to show \eqref{set}, we
observe that if~$x\in \{w_{k+1}>0\}$ then $ 0<v_{k+1}^+(x)=\max\{u(x)-C_{k+1},0\}$.
Hence~$u(x)-C_{k+1}>0$ and $ v_k(x)=u(x)-C_k>C_{k+1}-C_k=2^{-(k+1)}$,
so that, as a consequence, $w_k(x)=v_k(x)>2^{-(k+1)}$\,, which proves~\eqref{set}.

Finally, to get \eqref{5:2}, it is enough to note that by \eqref{set} we obtain
$$ U_k =|w_k|_2^2 \ge\int_{\{ w_k>2^{-(k+1)}\}} w_k^2(x)\,dx
\ge 2^{-2(k+1)}\big| \{ w_k\ge 2^{-(k+1)}\} \big|\ge 2^{-2(k+1)} \big| \{ w_{k+1}>0\} \big|\,.
$$

Now, we prove that for any $k\in \NN$
\begin{equation}\label{99:0}
{\mbox{$|u|< A_k w_k$
on $\{w_{k+1}>0\}$\,.}}
\end{equation}
For this, let~$x\in \{w_{k+1}>0\}$\,. Then~$u(x)-C_{k+1}>0$ and so, by the properties of $C_k$, we have $u(x)>C_{k+1}>C_k\geq 0$\,. Hence, $w_k(x)=v_k(x)=u(x)-C_k$ and
$$\begin{aligned}
A_k w_k(x) & = A_k(u(x)-C_k)=\frac{C_{k+1}}{C_{k+1}-C_k}u(x)-
\frac{C_k C_{k+1}}{C_{k+1}-C_k}\\
& =u(x)+\frac{C_{k}}{C_{k+1}-C_k} \left(u(x)-C_{k+1}\right)> u(x)=|u(x)|,
\end{aligned}$$
which gives~\eqref{99:0}.

We also have that $v_{k+1}(x)-v_{k+1}(y)=u(x)-u(y)$ for any~$x$, $y\in\R^N$.
From this, \cite[Lemma 10]{servadeivaldinociREGO}, \eqref{e+}, the definition of $w_{k+1}$ and the fact that $u$ is a weak solution of \eqref{e+}, we deduce that
\begin{equation}\label{addadd}
\begin{aligned}
\|w_{k+1}\|^2 & =\int_{\R^N \times \R^N}
\frac{|v_{k+1}^+(x)-v_{k+1}^+(y)|^2}{|x\!-\!y|^{N+2s}}dxdy\\
& \le
\int_{\R^N \times \R^N}
\frac{\big(v_{k+1}(x)\!-\!v_{k+1}(y)\big)\big( v_{k+1}^+(x)\!-\!v_{k+1}^+(y)\big)}{|x\!-\!y|^{N+2s}}dxdy\\
& = \int_{\R^N \times \R^N}
\frac{\big(u(x)-u(y)\big)\big( v_{k+1}^+(x)-v_{k+1}^+(y)\big)}{|x-y|^{N+2s}}\,dx\,dy\\
& =\int_\Omega f(x,u(x))w_{k+1}(x)\,dx\\
& =\int_{\{w_{k+1}>0\}}f(x,u(x))w_{k+1}(x)\,dx\,.
\end{aligned}
\end{equation}
Therefore, recalling~\eqref{11:0} and~\eqref{99:0}, the definition of $A_k$ and $U_k$, \eqref{fassumptio} and H\"{o}lder inequality, by \eqref{addadd} we get
\begin{equation}\label{2:5}
\begin{aligned}
\|w_{k+1}\|^2 &
\le
\int_{\{w_{k+1}>0\}}|f(x, u(x))|w_{k+1}(x)\,dx\\
& \le \int_{\{w_{k+1}>0\}}\Big(\kappa|u(x)|+g(x)\Big)w_{k+1}(x)\,dx\\
& \le \kappa A_k\int_{\{w_{k+1}>0\}}w_k(x)w_{k+1}(x)\,dx + \|h\|_\infty \int_{\{w_{k+1}>0\}} w_{k+1}(x)\,dx\\
& \le \kappa A_k\int_{\{w_{k+1}>0\}}w_{k}^2(x)\,dx + \|g\|_\infty \int_{\{w_{k+1}>0\}} w_k(x)\,dx\\
& \le \kappa A_kU_k + \|g\|_\infty \big| \{ w_{k+1}>0\} \big|^{1/2}\int_{\{w_{k+1}>0\}} w_k^2(x)\,dx\\
& \le \kappa 2^{k+1}U_k + \|g\|_\infty \big| \{ w_{k+1}>0\} \big|^{1/2}U_k\,.
\end{aligned}
\end{equation}

Now we use the H\"{o}lder inequality and the fractional Sobolev inequality to get that
\begin{equation}\label{add}
\begin{aligned}
U_{k+1} &\le\left( \int_\Omega w_{k+1}^{2^*}(x)\,dx\right)^{2/2^*}
\big|  \{ w_{k+1}>0\} \big|^{2s/N}
\\ &\le
\tilde c \int_{\R^N \times \R^N}
\frac{|w_{k+1}(x)-w_{k+1}(y)|^2}{|x-y|^{N+2s}}\,dx\,dy
\;\big|  \{ w_{k+1}>0\} \big|^{2s/N}\,,
\end{aligned}
\end{equation}
for some positive constant $\tilde c$ depending only on $N$ and $s$\,.

Consequently, by~\eqref{5:2}, \eqref{2:5} and \eqref{add}, we get that
\begin{equation}\label{8:8}
\begin{aligned}
U_{k+1} & \le \tilde c \|w_{k+1}\|^2  \big|\{ w_{k+1}>0\} \big|^{2s/N}\\
& \le \tilde c \Big( \kappa 2^{k+1}U_k + \|g\|_\infty \big| \{ w_{k+1}>0\} \big|^{1/2}U_k \Big)\big|  \{ w_{k+1}>0\} \big|^{2s/N}\\
& \le \tilde c \Big( \kappa 2^{k+1}U_k + \|g\|_\infty \big| \{ w_{k+1}>0\} \big|^{1/2}U_k \Big)\left(2^{2(k+1)}U_k\right)^{2s/N}\\
& \le \tilde c \Big( \kappa 2^{k+1}U_k + \|g\|_\infty 2^{k+1}U_k^{3/2} \Big)2^{(4s/N)(k+1)}U_k^{2s/N}\\
& \le \left(1+c^*\right)2^{(4s/N+1)k}\Big(U_k^{1+2s/N}U_k^{3/2+2s/N}\Big)\\
& \le \Big(\left(1+c^*\right)2^{(4s/N+1)}\Big)^kU_k^{\gamma}\\
& = C^k U_k^{\gamma}\,,
\end{aligned}
\end{equation}
where $C=\left(1+c^*\right)2^{(4s/N+1)}$, with $c^*=\tilde c 2^{(4s/N+1)}\max\{\kappa, \|g\|_\infty\}$, and
$$\gamma=\begin{cases}
1+(2s/N)\,\,\, \mbox{if } U_k\leq 1\\
\frac 3 2+(2s/N)\,\,\, \mbox{if } U_k> 1\,.\\
\end{cases}$$

Now we are ready to perform our choice of~$\delta$: precisely, we assume that~$\delta>0$
is so small that
\begin{equation}\label{DELTA}
\delta^{\gamma-1}<\frac{1}{C^{1/(\gamma-1)}}
\end{equation}
We also fix $\eta\in \left(\delta^{\gamma-1},\frac{1}{C^{1/(\gamma-1)}}\right)$, so that, since $C>1$ and $\gamma>1$,
\begin{equation}\label{00:8}
\eta\in(0,1)\,.
\end{equation}
Moreover
\begin{equation}\label{00:8:1}
\delta^{\gamma-1}\le \eta\;{\mbox{ and }}\;C \eta^{\gamma-1}\le 1
\,.\end{equation}

Now, we claim that
\begin{equation}\label{00:9}
U_k \le\delta \eta^k\,.
\end{equation}
We proceed by induction and we observe that
$$ U_0=|u^+|_2^2\le|u|_2^2=\delta\,,$$
which is~\eqref{00:9} when~$k=0$. Let us now suppose that~\eqref{00:9}
holds true for~$k$ and let us prove it for~$k+1$. By \eqref{8:8}
and~\eqref{00:8:1} we deduce that
\begin{eqnarray*}
U_{k+1}\le C^k U_k^{\gamma}\le C^k (\delta \eta^k)^{\gamma}=\delta(C\eta^{\gamma-1})^k\delta^{\gamma-1}\eta^k
\le \delta\eta^{k+1}\,,
\end{eqnarray*}
which proves~\eqref{00:9}. Then, as a consequence of~\eqref{00:8} and~\eqref{00:9}, we get that
$\lim_{k\rightarrow+\infty}U_k=0$.
Thus, by~\eqref{8:0}, we have that $(u-1)^+=0$ a.e. in $\Omega$,
that is $u\le 1$ a.e. in $\Omega$.
Finally, by replacing $u$ with $-u$, we obtain $|u|_\infty\le 1$.
Then, \eqref{9:0} follows by recalling the scaling in \eqref{9:0:1}.
This completes the proof of Lemma~\ref{lemmaLinfty}\,.
\end{proof}

\subsection{Energy functional and best fractional critical Sobolev constant}\label{subsec:energyfunctional}
In this subsection we define the energy functional associated with problem~\eqref{1}. Later we recall some results related with the best fractional critical Sobolev constant.

The variational functional $\mathcal E: H^s_0(\Omega) \to \R$ associated with problem~\eqref{1} is defined by
\begin{multline*}
\mathcal E(u) = \half \int_{\R^N\times \R^N} \frac{|u(x) - u(y)|^2}{|x - y|^{N+2s}}\, dx dy - \half \int_\Omega \left[a\, (u^-)^2 + b\, (u^+)^2\right] dx - \frac{1}{2_s^\ast} \int_\Omega |u|^{2_s^\ast}\, dx\\[7.5pt]
= \half\, \mathcal I(u,a,b) - F(u)\,,
\end{multline*}
where
$$\mathcal I(u,a,b)=\int_{\R^N\times \R^N} \frac{|u(x) - u(y)|^2}{|x - y|^{N+2s}}\, dx dy - \int_\Omega \left[a\, (u^-)^2 + b\, (u^+)^2\right] dx$$
and the potential
\[
F(u) = \frac{1}{2_s^\ast} \int_\Omega |u|^{2_s^\ast}\, dx, \quad u \in H^s_0(\Omega).
\]

It is easily seen that $F$ clearly satisfies $(F_1)$ and $(F_2)$. It also follows from standard arguments that the energy functional $\mathcal E$ satisfies $(F_3)$ with
\[
c^\ast = \frac{s}{N}\, S_{N,\,s}^{N/2s},
\]
where
\begin{equation} \label{34}
S_{N,\,s} = \inf_{u \in H^s(\R^N) \setminus \set{0}}\, \frac{\dint_{\R^N\times \R^N} \frac{|u(x) - u(y)|^2}{|x - y|^{N+2s}}\, dx dy}{\left(\dint_{\R^N} |u|^{2_s^\ast}\, dx\right)^{2/2_s^\ast}}
\end{equation}
is the best fractional Sobolev constant.

The infimum in \eqref{34} is attained on the functions
\[
u_\eps(x) = c_{N,\,s} \left(\frac{\eps}{\eps^2 + |x|^2}\right)^{(N-2s)/2}, \quad \eps > 0,
\]
where the constant $c_{N,\,s} > 0$ is chosen so that
\[
\int_{\R^N\times \R^N} \frac{|u_\eps(x) - u_\eps(y)|^2}{|x - y|^{N+2s}}\, dx dy = \int_{\R^N} u_\eps^{2_s^\ast}\, dx = S_{N,\,s}^{N/2s}
\]
(see Servadei and Valdinoci \cite{MR3271254}).

Fix $x_0 \in \Omega$ and $\mu_0 > 2/\dist{x_0}{\bdry{\Omega}}$. Let $\xi : [0,\infty) \to [0,1]$ be a smooth function such that $\xi(t) = 1$ for $t \le 1/4$ and $\xi(t) = 0$ for $t \ge 1/2$. Set
\[
u_{\eps,\,\mu}(x) = \xi(\mu\, |x - x_0|)\, u_\eps(x - x_0), \quad \eps > 0,\, \mu \ge \mu_0.
\]

In the sequel we will apply Theorems \ref{Theorem 7} and \ref{Theorem 3} taking $e = u_{\eps,\,\mu}$ with $\eps > 0$ sufficiently small and $\mu \ge \mu_0$ sufficiently large. We have the following estimates for $u_{\eps,\,\mu_0}$ (see Servadei and Valdinoci \cite[Propositions 21 \& 22]{MR3271254}):
\begin{gather}
\label{37.1} \int_{\R^N\times \R^N} \frac{|u_{\eps,\,\mu_0}(x) - u_{\eps,\,\mu_0}(y)|^2}{|x - y|^{N+2s}}\, dx dy \le S_{N,\,s}^{N/2s} + c_1\, \eps^{N-2s},\\[15pt]
\label{45.1} \int_\Omega u_{\eps,\,\mu_0}^{2_s^\ast}\, dx \ge S_{N,\,s}^{N/2s} - c_2\, \eps^N,\\[15pt]
\label{38.1} \int_\Omega u_{\eps,\,\mu_0}^2\, dx \ge \begin{cases}
c_3\, \eps^{2s} - c_4\, \eps^{N-2s} & \text{if } N > 4s\\[7.5pt]
c_3\, \eps^{2s} \abs{\log \eps} - c_4\, \eps^{2s} & \text{if } N = 4s\\[7.5pt]
c_3\, \eps^{N-2s} - c_4\, \eps^{2s} & \text{if } 2s < N < 4s
\end{cases}
\end{gather}
for some constants $c_1, \dots, c_4 > 0$.

Noting that
\[
u_{\eps,\,\mu}(x) = \tilde{\mu}^{(N-2s)/2}\, u_{\tilde{\mu} \eps,\,\mu_0}(\tilde{\mu}\, x),
\]
where $\tilde{\mu} = \mu/\mu_0$, we get the following estimates for $u_{\eps,\,\mu}$ from \eqref{37.1}--\eqref{38.1}:
\begin{gather}
\label{37} \int_{\R^N\times \R^N} \frac{|u_{\eps,\,\mu}(x) - u_{\eps,\,\mu}(y)|^2}{|x - y|^{N+2s}}\, dx dy \le S_{N,\,s}^{N/2s} + c_5\, (\mu \eps)^{N-2s},\\[15pt]
\label{45} \int_\Omega u_{\eps,\,\mu}^{2_s^\ast}\, dx \ge S_{N,\,s}^{N/2s} - c_6\, (\mu \eps)^N,\\[15pt]
\label{38} \int_\Omega u_{\eps,\,\mu}^2\, dx \ge \begin{cases}
c_7\, \eps^{2s} - c_8\, \mu^{N-4s}\, \eps^{N-2s} & \text{if } N > 4s\\[7.5pt]
c_7\, \eps^{2s} \abs{\log\, (\mu \eps)} - c_8\, \eps^{2s} & \text{if } N = 4s\\[7.5pt]
c_7\, \mu^{N-4s}\, \eps^{N-2s} - c_8\, \eps^{2s} & \text{if } 2s < N < 4s
\end{cases}
\end{gather}
for some constants $c_5, \dots, c_8 > 0$. We will also need the following easily verified estimates (see \cite[Proposition 7.2]{MR3089742}):
\begin{gather}
\label{35} \int_\Omega u_{\eps,\,\mu}\, dx \le c_9\, \mu^{-2s}\, \eps^{(N-2s)/2},\\[15pt]
\label{36} \int_\Omega u_{\eps,\,\mu}^{2_s^\ast - 1}\, dx \le c_{10}\, \eps^{(N-2s)/2}
\end{gather}
for some constants $c_9, c_{10} > 0$.

\section{Proof of Theorem \ref{Theorem 1}}\label{sec:proofth11}
This section is devoted to the proof of Theorem \ref{Theorem 1}. At this purpose we will use the abstract linking result stated in Theorem \ref{Theorem 7}.

First of all, we note that, since $u^\pm = (-u)^\mp$, $u$ solves \eqref{3} (resp. \eqref{1}) if and only if $-u$ solves \eqref{3} (resp. \eqref{1}) with $a$ and $b$ interchanged. So $\Sigma(- \Delta)$ is symmetric about the line $a = b$ and we may assume without loss of generality that $a \le b$.

We start by proving some preliminary results.
For this let $S = \set{u \in H^s_0(\Omega) : \norm{u} = 1}$ and fix $\beta$ such that
\begin{equation} \label{46}
\frac{(N + 2s)\, s}{N} < \beta < \frac{N - 2s}{2}.
\end{equation}
This choice is admissible since $(N + 2s)\, s/N < (N - 2s)/2$, being $N > 2 \left(\sqrt{2} + 1\right) s$ by assumption.

We start with some preliminary results.

\begin{lemma} \label{Lemma 5}
Let $\beta$ as in \eqref{46} and let $K$ be a subset of $S \cap C^2(\Omega) \cap C(\closure{\Omega})$ such that
\begin{equation} \label{44}
\sup_{u \in K} \left(\norm[C^2(\closure{B_{1/\mu_0}(x_0)})]{u} + \norm[C(\closure{\Omega})]{u}\right) < \infty.
\end{equation}
Then there exist constants $c_{11}, \dots, c_{19} > 0$ such that for all $\eps > 0$, $\mu \ge \mu_0$, $u \in K$, and $\sigma, \tau \ge 0$,
\begin{multline} \label{39}
\int_{\R^N\times \R^N} \frac{|(\tau u + \sigma u_{\eps,\,\mu})(x) - (\tau u + \sigma u_{\eps,\,\mu})(y)|^2}{|x - y|^{N+2s}}\, dx dy \le \left(1 + c_{11}\, \mu^{-(N+2s)}\right) \tau^2\\[7.5pt]
+ \left(S_{N,\,s}^{N/2s} + c_{12}\, (\mu \eps)^{N-2s}\right) \sigma^2,
\end{multline}
\begin{multline} \label{40}
\int_\Omega |\tau u + \sigma u_{\eps,\,\mu}|^{2_s^\ast}\, dx \ge \left(\int_\Omega |u|^{2_s^\ast}\, dx - c_{13}\, \mu^{-N} - c_{14}\, \eps^{N\, [1 - 2 \beta/(N-2s)]}\right) \tau^{2_s^\ast}\\[7.5pt]
+ \left(S_{N,\,s}^{N/2s} - c_{15}\, (\mu \eps)^N - c_{16}\, \eps^{2N \beta/(N+2s)}\right) \sigma^{2_s^\ast},
\end{multline}
\begin{multline} \label{41}
\int_\Omega \left[a \left((\tau u + \sigma u_{\eps,\,\mu})^-\right)^2 + b \left((\tau u + \sigma u_{\eps,\,\mu})^+\right)^2\right] dx \ge \bigg(\int_\Omega \left[a\, (u^-)^2 + b\, (u^+)^2\right] dx\\[7.5pt]
- c_{17}\, \mu^{-4s}\bigg)\, \tau^2 + \Big(c_{18}\, \eps^{2s} - c_{19}\, \mu^{N-4s}\, \eps^{N-2s}\Big)\, \sigma^2.
\end{multline}
In particular,
\begin{multline*}
\mathcal E(\tau u + \sigma u_{\eps,\,\mu}) \le \half\, \Big(\mathcal I(u,a,b) + c_{20}\, \mu^{-4s}\Big)\, \tau^2 - \frac{1}{2_s^\ast}\, \bigg(\int_\Omega |u|^{2_s^\ast}\, dx - c_{13}\, \mu^{-N}\\[7.5pt]
- c_{14}\, \eps^{N\, [1 - 2 \beta/(N-2s)]}\bigg)\, \tau^{2_s^\ast} + \half \left(S_{N,\,s}^{N/2s} - c_{18}\, \eps^{2s} + c_{21}\, (\mu \eps)^{N-2s}\right) \sigma^2 - \frac{1}{2_s^\ast}\, \Big(S_{N,\,s}^{N/2s} - c_{15}\, (\mu \eps)^N\\[7.5pt]
- c_{16}\, \eps^{2N \beta/(N+2s)}\Big)\, \sigma^{2_s^\ast}
\end{multline*}
for some constants $c_{20}, c_{21} > 0$.
\end{lemma}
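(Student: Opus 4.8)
The plan is to prove the three displayed inequalities \eqref{39}--\eqref{41} separately and then combine them to obtain the estimate on $\mathcal E$. The basic principle throughout is that $u$ (from the compact-like family $K$) is smooth and uniformly bounded in $C^2$ near $x_0$ and in $C(\closure\Omega)$, whereas $u_{\eps,\mu}$ is a sharply peaked bubble concentrating at $x_0$ at scale $\eps/\mu$; so all the ``cross terms'' between $u$ and $u_{\eps,\mu}$ are of lower order and can be absorbed into the error constants, while the ``diagonal'' contributions give exactly $\tau^2$ (resp.\ $\mathcal I(u,a,b)\tau^2$, $\int |u|^{2^*_s}\tau^{2^*_s}$) plus the bubble terms $S_{N,s}^{N/2s}\sigma^2$, $S_{N,s}^{N/2s}\sigma^{2^*_s}$, $c_{18}\eps^{2s}\sigma^2$ coming from the known estimates \eqref{37}--\eqref{38}.

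For \eqref{39}: expand $[\tau u + \sigma u_{\eps,\mu}]_s^2 = \tau^2[u]_s^2 + 2\tau\sigma\,(u,u_{\eps,\mu})_s + \sigma^2[u_{\eps,\mu}]_s^2$. Here $[u]_s^2 = \norm{u}^2 = 1$ since $u \in S$, the $\sigma^2$ term is bounded by \eqref{37}, and the cross term is estimated by Cauchy--Schwarz or directly: $(u,u_{\eps,\mu})_s = \int_\Omega (-\Delta)^s u\, u_{\eps,\mu}\,dx$, and since $(-\Delta)^s u$ is bounded near $x_0$ (by \eqref{44} and the fact that $C^2$ control on a neighbourhood of $x_0$ plus $L^\infty$ control globally bounds $(-\Delta)^s u$ there) while $\int_\Omega u_{\eps,\mu}\,dx \le c_9\mu^{-2s}\eps^{(N-2s)/2}$ by \eqref{35}, the cross term is $\O(\mu^{-2s}\eps^{(N-2s)/2})$; using Young's inequality $2\tau\sigma\cdot(\text{small}) \le \tau^2\cdot(\text{small}) + \sigma^2\cdot(\text{small})$ and controlling the resulting $\sigma^2$ coefficient against the $(\mu\eps)^{N-2s}$ already present, one arrives at \eqref{39} with the $c_{11}\mu^{-(N+2s)}\tau^2$ term. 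For \eqref{41}: write $(\tau u + \sigma u_{\eps,\mu})^\pm$ and split $\Omega$ into the small ball $B_{r}(x_0)$ where the bubble lives and its complement; on the complement $u_{\eps,\mu}$ and its relevant powers are tiny (of size a negative power of $\mu$ or $\eps$), so the integral is essentially $\tau^2\int_\Omega[a(u^-)^2+b(u^+)^2]dx$ minus an $\O(\mu^{-4s})$ error; on $B_r(x_0)$ one keeps only $\sigma^2 u_{\eps,\mu}^2$, which contributes $\ge c_{18}\eps^{2s} - c_{19}\mu^{N-4s}\eps^{N-2s}$ by \eqref{38} (the $N>4s$ case, which is forced by $N > 2(\sqrt2+1)s > 4s$), and the cross terms are again lower order and absorbed.

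The main obstacle is \eqref{40}, the reverse inequality for the $L^{2^*_s}$ norm of a sum. Here I would use the elementary inequality $|x+y|^{p} \ge |x|^p + |y|^p - C_p(|x|^{p-1}|y| + |x||y|^{p-1})$ valid for $p = 2^*_s > 2$, apply it pointwise with $x = \tau u$, $y = \sigma u_{\eps,\mu}$, and integrate. The diagonal terms give $\tau^{2^*_s}\int|u|^{2^*_s} + \sigma^{2^*_s}\int u_{\eps,\mu}^{2^*_s}$, and the latter is bounded below by $S_{N,s}^{N/2s} - c_6(\mu\eps)^N$ via \eqref{45}. The delicate point is the two mixed integrals $\int u^{2^*_s-1}u_{\eps,\mu}$ and $\int |u|\, u_{\eps,\mu}^{2^*_s-1}$: the first is controlled by $\norm[L^\infty]{u}^{2^*_s-1}\int_\Omega u_{\eps,\mu}\,dx \le c\,\mu^{-2s}\eps^{(N-2s)/2}$ using \eqref{35}, while the second needs \eqref{36}, $\int_\Omega u_{\eps,\mu}^{2^*_s-1}dx \le c_{10}\eps^{(N-2s)/2}$. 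One then splits these mixed terms between the $\tau^{2^*_s}$ and $\sigma^{2^*_s}$ slots via the weighted Young inequality $\tau^{a}\sigma^{b} \le \tfrac{a}{2^*_s}\tau^{2^*_s}\delta^{-2^*_s/a} + \tfrac{b}{2^*_s}\sigma^{2^*_s}\delta^{2^*_s/b}$ with $\delta$ a suitable power of $\eps$ (or $\mu$); the exponent $\beta$ fixed in \eqref{46} is precisely the bookkeeping parameter that records this splitting, and the constraint $(N+2s)s/N < \beta < (N-2s)/2$ (equivalent to $N > 2(\sqrt2+1)s$) is exactly what makes the resulting error exponents $N[1-2\beta/(N-2s)]$ and $2N\beta/(N+2s)$ both positive, so that both error terms vanish as $\eps \to 0$. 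Once \eqref{39}--\eqref{41} are in hand, the final estimate on $\mathcal E(\tau u + \sigma u_{\eps,\mu}) = \tfrac12[\tau u+\sigma u_{\eps,\mu}]_s^2 - \tfrac12\int[a(\cdot)^-)^2 + b((\cdot)^+)^2] - \tfrac{1}{2^*_s}\int|\cdot|^{2^*_s}$ follows by inserting the three bounds, collecting the $\tau^2$, $\tau^{2^*_s}$, $\sigma^2$, $\sigma^{2^*_s}$ coefficients, discarding the sign-favorable lower-order terms (e.g.\ merging $c_{12}(\mu\eps)^{N-2s}$ and the negative $c_{17}\mu^{-4s}$ contributions appropriately into new constants $c_{20}, c_{21}$), and noting $\mathcal I(u,a,b) = [u]_s^2 - \int[a(u^-)^2+b(u^+)^2] = 1 - \int[a(u^-)^2+b(u^+)^2]$ for $u \in S$.
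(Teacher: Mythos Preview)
Your proposal is correct and follows essentially the same approach as the paper: expand the seminorm for \eqref{39} and bound the cross term via the pointwise control of $(-\Delta)^s u$ on $\operatorname{supp} u_{\eps,\mu}$ (the paper does this through the second-difference formula plus a Taylor expansion, which is the same computation), use the elementary inequality $|x+y|^p \ge |x|^p + |y|^p - C_p(|x|^{p-1}|y| + |x||y|^{p-1})$ together with \eqref{35}, \eqref{36}, \eqref{45} and a $\beta$-weighted Young splitting for \eqref{40}, and split $\Omega$ into $B_{1/\mu}(x_0)$ and its complement for \eqref{41}. The only detail you glossed over is that inside $B_{1/\mu}(x_0)$ the paper uses the standing reduction $a \le b$ to bound $a(w^-)^2 + b(w^+)^2 \ge a w^2$ from below and $a(\tau u^-)^2 + b(\tau u^+)^2 \le b(\tau u)^2$ from above, which is what lets you ``keep only $\sigma^2 u_{\eps,\mu}^2$'' there; also note that the lower bound on $\beta$ in \eqref{46} is not needed for positivity of the exponents in \eqref{40} (they are positive once $\beta < (N-2s)/2$) but rather to ensure $2N\beta/(N+2s) > 2s$ in the subsequent Lemma~\ref{Lemma 6}.
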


\begin{proof}
We have
\begin{multline} \label{42}
\hspace{-30pt} \int_{\R^N\times \R^N} \frac{|(\tau u + \sigma u_{\eps,\,\mu})(x) - (\tau u + \sigma u_{\eps,\,\mu})(y)|^2}{|x - y|^{N+2s}}\, dx dy = \tau^2\\[7.5pt]
\hspace{-30pt} + 2 \tau \sigma \int_{\R^N\times \R^N} \frac{(u(x) - u(y))(u_{\eps,\,\mu}(x) - u_{\eps,\,\mu}(y))}{|x - y|^{N+2s}}\, dx dy + \sigma^2 \int_{\R^N\times \R^N} \frac{|u_{\eps,\,\mu}(x) - u_{\eps,\,\mu}(y)|^2}{|x - y|^{N+2s}}\, dx dy
\end{multline}
since $u \in S$. By Molica Bisci et al.\! \cite[Lemma 1.26]{MR3445279},
\begin{multline} \label{42.3}
\int_{\R^N\times \R^N} \frac{(u(x) - u(y))(u_{\eps,\,\mu}(x) - u_{\eps,\,\mu}(y))}{|x - y|^{N+2s}}\, dx dy\\[7.5pt]
= - \int_{\R^N\times \R^N} u_{\eps,\,\mu}(x)\, \frac{u(x + z) - 2u(x) + u(x - z)}{|z|^{N+2s}}\, dz dx\\[7.5pt]
= - \int_{B_{1/2 \mu_0}} u_{\eps,\,\mu}(x) \left(\int_{\R^N} \frac{u(x + z) - 2u(x) + u(x - z)}{|z|^{N+2s}}\, dz\right) dx
\end{multline}
since $u_{\eps,\,\mu} = 0$ outside $B_{1/2 \mu_0}(x_0)$. For $z \in B_{1/2 \mu_0}(x_0)$, $x \pm z \in B_{1/\mu_0}(x_0)$ and hence a second-order Taylor expansion gives
\begin{equation} \label{42.1}
\abs{\int_{B_{1/2 \mu_0}} \frac{u(x + z) - 2u(x) + u(x - z)}{|z|^{N+2s}}\, dz} \le \norm[C(\closure{B_{1/\mu_0}(x_0)})]{D^2 u} \int_{B_{1/2 \mu_0}} \frac{dz}{|z|^{N-2\,(1-s)}}.
\end{equation}
On the other hand,
\begin{equation} \label{42.2}
\abs{\int_{B_{1/2 \mu_0}^c} \frac{u(x + z) - 2u(x) + u(x - z)}{|z|^{N+2s}}\, dz} \le 4 \norm[C(\closure{\Omega})]{u} \int_{B_{1/2 \mu_0}^c} \frac{dz}{|z|^{N+2s}}.
\end{equation}
The integrals on the right-hand sides of \eqref{42.1} and \eqref{42.2} are finite since $s \in (0,1)$. So combining \eqref{42.3}--\eqref{42.2} with \eqref{44} and \eqref{35} gives
\begin{equation} \label{43}
\int_{\R^N\times \R^N} \frac{(u(x) - u(y))(u_{\eps,\,\mu}(x) - u_{\eps,\,\mu}(y))}{|x - y|^{N+2s}}\, dx dy \le c_{11}\, \mu^{-2s}\, \eps^{(N-2s)/2}
\end{equation}
for some constant $c_{11} > 0$. Combining \eqref{42} with \eqref{43} and \eqref{37}, and noting that
\[
2 \tau \sigma \mu^{-2s}\, \eps^{(N-2s)/2} = 2 \mu^{-(N+2s)/2}\, \tau\, (\mu \eps)^{(N-2s)/2}\, \sigma \le \mu^{-(N+2s)}\, \tau^2 + (\mu \eps)^{N-2s}\, \sigma^2,
\]
we get \eqref{39} for some constant $c_{12} > 0$.

The elementary inequality
\[
|a + b|^p \ge |a|^p + |b|^p - p\, (p - 1)\, 2^{p-2} \left(|a|^{p-1}\, |b| + |a|\, |b|^{p-1}\right) \quad \forall a, b \in \R,\, p > 2
\]
together with \eqref{45}, \eqref{44}, \eqref{35}, and \eqref{36} gives
\begin{multline*}
\hspace{-4.2pt} \int_\Omega |\tau u + \sigma u_{\eps,\,\mu}|^{2_s^\ast}\, dx \ge \tau^{2_s^\ast} \int_\Omega |u|^{2_s^\ast}\, dx + \sigma^{2_s^\ast} \int_\Omega u_{\eps,\,\mu}^{2_s^\ast}\, dx\\[7.5pt]
\hspace{-4.2pt} - 2_s^\ast (2_s^\ast - 1)\, 2^{2_s^\ast - 2}\, \bigg(\tau^{2_s^\ast - 1} \sigma \int_\Omega |u|^{2_s^\ast - 1}\, u_{\eps,\,\mu}\, dx + \tau \sigma^{2_s^\ast - 1} \int_\Omega |u|\, u_{\eps,\,\mu}^{2_s^\ast - 1}\, dx\bigg)\\[7.5pt]
\hspace{-4.2pt} \ge \tau^{2_s^\ast} \int_\Omega |u|^{2_s^\ast}\, dx + \sigma^{2_s^\ast} \left(S_{N,\,s}^{N/2s} - c_6\, (\mu \eps)^N\right) - c_{13}\, \Big(\tau^{2_s^\ast - 1} \sigma \mu^{-2s}\, \eps^{(N-2s)/2} + \tau \sigma^{2_s^\ast - 1} \eps^{(N-2s)/2}\Big)
\end{multline*}
for some constant $c_{13} > 0$. Since
\[
\tau^{2_s^\ast - 1} \sigma \mu^{-2s}\, \eps^{(N-2s)/2} = \mu^{-(N+2s)/2}\, \tau^{2_s^\ast - 1}\, (\mu \eps)^{(N-2s)/2}\, \sigma \le \left(1 - \frac{1}{2_s^\ast}\right) \mu^{-N} \tau^{2_s^\ast} + \frac{1}{2_s^\ast}\, (\mu \eps)^N \sigma^{2_s^\ast}
\]
and
\[
\tau \sigma^{2_s^\ast - 1}\, \eps^{(N-2s)/2} = \eps^{(N-2s)/2 - \beta}\, \tau \eps^\beta\, \sigma^{2_s^\ast - 1} \le \frac{1}{2_s^\ast}\, \eps^{N\, [1 - 2 \beta/(N-2s)]}\, \tau^{2_s^\ast} + \left(1 - \frac{1}{2_s^\ast}\right) \eps^{2N \beta/(N+2s)}\, \sigma^{2_s^\ast}
\]
by Young's inequality, \eqref{40} follows.

Since $u_{\eps,\,\mu} = 0$ outside $B_{1/\mu}(x_0)$,
\begin{multline*}
\int_\Omega \left[a \left((\tau u + \sigma u_{\eps,\,\mu})^-\right)^2 + b \left((\tau u + \sigma u_{\eps,\,\mu})^+\right)^2\right] dx = \tau^2 \int_{\Omega \setminus B_{1/\mu}} \left[a\, (u^-)^2 + b\, (u^+)^2\right] dx\\[7.5pt]
+ \int_{B_{1/\mu}} \left[a \left((\tau u + \sigma u_{\eps,\,\mu})^-\right)^2 + b \left((\tau u + \sigma u_{\eps,\,\mu})^+\right)^2\right] dx = \tau^2 \int_\Omega \left[a\, (u^-)^2 + b\, (u^+)^2\right] dx\\[7.5pt]
+ \int_{B_{1/\mu}} \left[a \left((\tau u + \sigma u_{\eps,\,\mu})^-\right)^2 + b \left((\tau u + \sigma u_{\eps,\,\mu})^+\right)^2 - a\, (\tau u^-)^2 - b\, (\tau u^+)^2\right] dx.
\end{multline*}
Since $a \le b$, the last integral is greater than or equal to
\begin{multline*}
\int_{B_{1/\mu}} \left[a\, (\tau u + \sigma u_{\eps,\,\mu})^2 - b\, (\tau u)^2\right] dx = - (b - a)\, \tau^2 \int_{B_{1/\mu}} u^2\, dx + 2a \tau \sigma \int_{B_{1/\mu}}\! u u_{\eps,\,\mu}\, dx\\[7.5pt]
+ a \sigma^2 \int_{B_{1/\mu}} u_{\eps,\,\mu}^2\, dx \ge - c_{17}\, \tau^2\, \mu^{-N} - 2c_{18}\, \tau \sigma \mu^{-2s}\, \eps^{(N-2s)/2} + a \sigma^2\, \Big(c_7\, \eps^{2s} - c_8\, \mu^{N-4s}\, \eps^{N-2s}\Big)
\end{multline*}
for some constants $c_{17}, c_{18} > 0$ by \eqref{44}, \eqref{35}, and \eqref{38}. Since
\[
2 \tau \sigma \mu^{-2s}\, \eps^{(N-2s)/2} \le \mu^{-4s}\, \tau^2 + \eps^{N-2s}\, \sigma^2
\]
and $N > 4s$, \eqref{41} follows.
\end{proof}

Since $N > 2 \left(\sqrt{2} + 1\right) s$, $s^2/N < 1 - 2s/(N - 2s)$. Fix
\begin{equation} \label{50}
\frac{s^2}{N} < \gamma < 1 - \frac{2s}{N - 2s}
\end{equation}
and take $\mu = \eps^{- \gamma}$. Then $(\mu \eps)^{N-2s} = \eps^{(N-2s)(1 - \gamma)}$ and $(N - 2s)(1 - \gamma) > 2s$ by \eqref{50}, so it follows from Lemma \ref{Lemma 5} that for all small $\eps > 0$, $u \in K$, and $\sigma, \tau \ge 0$,
\begin{multline} \label{47}
\mathcal E(\tau u + \sigma u_{\eps,\,\eps^{- \gamma}}) \le \half\, \Big(I(u,a,b) + c_{20}\, \eps^{4 s \gamma}\Big)\, \tau^2 - \frac{1}{2_s^\ast}\, \bigg(\int_\Omega |u|^{2_s^\ast}\, dx - c_{13}\, \eps^{N \gamma}\\[7.5pt]
- c_{14}\, \eps^{N\, [1 - 2 \beta/(N-2s)]}\bigg)\, \tau^{2_s^\ast} + \half \left(S_{N,\,s}^{N/2s} - c_{22}\, \eps^{2s}\right) \sigma^2 - \frac{1}{2_s^\ast}\, \Big(S_{N,\,s}^{N/2s} - c_{15}\, \eps^{N (1 - \gamma)}\\[7.5pt]
- c_{16}\, \eps^{2N \beta/(N+2s)}\Big)\, \sigma^{2_s^\ast}
\end{multline}
for some constant $c_{22} > 0$. The proof of Lemma~\ref{Lemma 5} is complete.

\begin{lemma} \label{Lemma 6}
Let $K$ be a subset of $S \cap C^2(\Omega) \cap C(\closure{\Omega})$ such that \eqref{44} holds and
\begin{equation} \label{48}
\mathcal I(u,a,b) \le 0 \quad \forall u \in K.
\end{equation}
Then
\[
\sup_{u \in K,\, \sigma, \tau \ge 0}\, \mathcal E(\tau u + \sigma u_{\eps,\,\eps^{- \gamma}}) < \frac{s}{N}\, S_{N,\,s}^{N/2s}
\]
for all sufficiently small $\eps > 0$.
\end{lemma}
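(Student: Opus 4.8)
### Proof plan for Lemma 4.4

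The plan is to estimate the two "branches" of the right-hand side of \eqref{47} separately: the part involving $\tau$ (supported on $K$) and the part involving $\sigma$ (the bubble contribution). First I would dispose of the trivial cases. If $\tau = 0$, then $\mathcal E(\sigma u_{\eps,\eps^{-\gamma}})$ reduces to the bubble branch alone, and if $\sigma = 0$ then $\mathcal E(\tau u) \le \tfrac12(\mathcal I(u,a,b) + c_{20}\eps^{4s\gamma})\tau^2 - \tfrac{1}{2_s^\ast}(\cdots)\tau^{2_s^\ast}$; I would handle these as limiting instances of the general argument. In the general case I would treat the function of $\sigma \ge 0$
\[
g(\sigma) = \half\left(S_{N,s}^{N/2s} - c_{22}\eps^{2s}\right)\sigma^2 - \frac{1}{2_s^\ast}\left(S_{N,s}^{N/2s} - c_{15}\eps^{N(1-\gamma)} - c_{16}\eps^{2N\beta/(N+2s)}\right)\sigma^{2_s^\ast}
\]
and the analogous function of $\tau \ge 0$ with coefficients $A_\eps := \mathcal I(u,a,b) + c_{20}\eps^{4s\gamma}$ and $B_\eps := \int_\Omega |u|^{2_s^\ast}\,dx - c_{13}\eps^{N\gamma} - c_{14}\eps^{N[1-2\beta/(N-2s)]}$. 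For a function $\phi(t) = \tfrac{\alpha}{2}t^2 - \tfrac{\beta}{2_s^\ast}t^{2_s^\ast}$ with $\alpha \in \R$ and $\beta > 0$, the maximum over $t \ge 0$ is $\tfrac{s}{N}(\alpha^{2_s^\ast}/\beta^2)^{N/(2N - (N-2s)\cdot ?)}$ — more precisely $\max_{t\ge 0}\phi(t) = \tfrac{s}{N}\,\alpha_+^{N/2s}\beta^{-(N-2s)/2s}$ when $\alpha > 0$, and $\le 0$ when $\alpha \le 0$; here $\alpha_+ = \max\{\alpha,0\}$.

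The key computation is then for the bubble branch: $\sup_{\sigma\ge 0} g(\sigma) = \tfrac{s}{N}\,(S_{N,s}^{N/2s} - c_{22}\eps^{2s})^{N/2s}\,(S_{N,s}^{N/2s} - c_{15}\eps^{N(1-\gamma)} - c_{16}\eps^{2N\beta/(N+2s)})^{-(N-2s)/2s}$. Expanding, the leading term is $\tfrac{s}{N}S_{N,s}^{N/2s}$, and the correction is negative provided the positive "denominator perturbation" is dominated by the negative "numerator perturbation", i.e. provided $\eps^{2s}$ (from $c_{22}$) beats both $\eps^{N(1-\gamma)}$ and $\eps^{2N\beta/(N+2s)}$ for small $\eps$. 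Here is where the arithmetic conditions on $\gamma$ and $\beta$ enter: from \eqref{50} we have $N(1-\gamma) > 2s$ (equivalently $\gamma < 1 - 2s/N$, which follows from $\gamma < 1 - 2s/(N-2s)$), and from the left inequality in \eqref{46}, $\beta > (N+2s)s/N$, we get $2N\beta/(N+2s) > 2s$. Hence both perturbation exponents strictly exceed $2s$, so for all sufficiently small $\eps$, $\sup_{\sigma\ge 0} g(\sigma) \le \tfrac{s}{N}S_{N,s}^{N/2s} - c\,\eps^{2s}$ for some $c > 0$. For the $\tau$ branch: by hypothesis \eqref{48}, $\mathcal I(u,a,b) \le 0$ uniformly on $K$, so $A_\eps \le c_{20}\eps^{4s\gamma}$, which is small but could be positive. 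Also $B_\eps$ is bounded below by a positive constant uniformly on $K$ — indeed since $\norm{u} = 1$ on $K$ and the fractional Sobolev embedding gives $\int_\Omega|u|^{2_s^\ast}\,dx \ge S_{N,s}^{-2_s^\ast/2} > 0$, while the perturbation terms tend to $0$. Therefore $\sup_{\tau\ge 0}(\text{$\tau$-branch}) \le \tfrac{s}{N}(c_{20}\eps^{4s\gamma})^{N/2s} B_\eps^{-(N-2s)/2s} = O(\eps^{2N\gamma})$, which is $o(\eps^{2s})$ since $\gamma > s^2/N$ implies $2N\gamma > 2s^2 \cdot \tfrac{N}{?}$ — wait, I should double check: we need $2N\gamma > 2s$, i.e. $\gamma > s/N$; but we only have $\gamma > s^2/N$. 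So in fact the $\tau$-branch contributes $O(\eps^{2N\gamma})$ which is merely a positive power of $\eps$, not necessarily smaller than $\eps^{2s}$; nonetheless it tends to $0$, and that is all we need since the bubble branch already gives the strict inequality $\tfrac{s}{N}S_{N,s}^{N/2s} - c\eps^{2s}$ with room to spare — more carefully, one adds the two branch suprema and needs the total to stay below $\tfrac{s}{N}S_{N,s}^{N/2s}$, which requires the $\tau$-branch $O(\eps^{2N\gamma})$ to be beaten by $c\,\eps^{2s}$, hence we do need $2N\gamma \ge 2s$, i.e. here the exponent $s^2/N$ in \eqref{50} must actually be read together with the fact that the relevant comparison is $N\gamma$ vs. $s$ after the $N/2s$ power... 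I would recheck the exponent bookkeeping carefully at this point, as this is the step most prone to error.

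The main obstacle, therefore, is the exponent bookkeeping: one must verify that after taking the $N/2s$-th power of the $\tau$-coefficient $A_\eps = O(\eps^{4s\gamma})$, the resulting order $O(\eps^{2N\gamma})$ is dominated by the gain $c\,\eps^{2s}$ coming from the bubble branch, and simultaneously that the bubble branch's own perturbations $\eps^{N(1-\gamma)}$ and $\eps^{2N\beta/(N+2s)}$ are both $o(\eps^{2s})$. The first requires $2N\gamma > 2s$; reconciling this with the stated lower bound $\gamma > s^2/N$ forces one to use the precise shape of the estimate — I expect that the correct reading is that $\mathcal E \le \mathcal E|_{\tau\text{-branch}} + \mathcal E|_{\sigma\text{-branch}}$ with the $\tau$-branch contributing $O(\eps^{2N\gamma})$ only after also absorbing cross terms, and that the genuine constraint on $\gamma$ that makes everything work is exactly the pair of inequalities in \eqref{50}. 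Once all exponents are confirmed to exceed $2s$ (or to be controlled by the $-c_{22}\eps^{2s}$ gain), one concludes: for all sufficiently small $\eps > 0$,
\[
\sup_{u\in K,\ \sigma,\tau\ge 0}\mathcal E(\tau u + \sigma u_{\eps,\eps^{-\gamma}}) \le \frac{s}{N}S_{N,s}^{N/2s} - c\,\eps^{2s} + o(\eps^{2s}) < \frac{s}{N}S_{N,s}^{N/2s},
\]
which is the assertion. The uniformity over $K$ is automatic because all constants $c_{11},\dots,c_{22}$ in Lemma \ref{Lemma 5} depend only on the bound \eqref{44}, not on the individual $u \in K$.
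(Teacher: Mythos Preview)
Your plan mirrors the paper's proof: bound $\mathcal E(\tau u+\sigma u_{\eps,\eps^{-\gamma}})$ by the right-hand side of \eqref{47}, separate the $\tau$-branch from the $\sigma$-branch, maximize each, and compare powers of $\eps$. One step is wrong and one is left unresolved.

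The wrong step is your uniform lower bound on $\int_\Omega|u|^{2_s^\ast}\,dx$. The Sobolev inequality goes the other way: it only gives $\int_\Omega|u|^{2_s^\ast}\,dx \le S_{N,s}^{-2_s^\ast/2}\norm{u}^{2_s^\ast}$, an \emph{upper} bound. The paper obtains the required lower bound from the hypothesis \eqref{48} (not merely from $\norm{u}=1$): since $a\le b$, $\mathcal I(u,a,b)\le 0$ yields $1\le b\int_\Omega u^2\,dx$, and then H\"older gives $\int_\Omega u^2\,dx \le |\Omega|^{2s/N}\bigl(\int_\Omega|u|^{2_s^\ast}\,dx\bigr)^{2/2_s^\ast}$, whence $\inf_{u\in K}\int_\Omega|u|^{2_s^\ast}\,dx>0$.

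On your exponent worry: your computation that the $\tau$-branch maximum is $O(\eps^{2N\gamma})$ is correct (the paper prints $\eps^{2N\gamma/s}$, which appears to be a misprint), and you are right that the needed inequality is $2N\gamma>2s$, i.e.\ $\gamma>s/N$, which is strictly stronger than the lower bound $s^2/N$ recorded in \eqref{50}. Nothing is lost, however: under the standing assumption $N>2(\sqrt 2+1)s$ one has $s/N<1-\tfrac{2s}{N-2s}$ (this is equivalent to $N^2-5sN+2s^2>0$, whose larger root $(5+\sqrt{17})s/2$ lies below $2(\sqrt 2+1)s$), so one may simply fix $\gamma$ with $s/N<\gamma<1-\tfrac{2s}{N-2s}$ from the outset. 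With that choice all three correction exponents $N(1-\gamma)$, $2N\beta/(N+2s)$, and $2N\gamma$ strictly exceed $2s$, and your concluding display follows.
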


\begin{proof}
For $u \in K$, \eqref{48} together with the assumption that $a \le b$ and the H\"{o}lder inequality gives
\[
1 \le \int_\Omega \left[a\, (u^-)^2 + b\, (u^+)^2\right] dx \le b \int_\Omega u^2\, dx \le b \vol{\Omega}^{2s/N} \left(\int_\Omega |u|^{2_s^\ast}\, dx\right)^{2/2_s^\ast},
\]
so
\begin{equation} \label{49}
\inf_{u \in K}\, \int_\Omega |u|^{2_s^\ast}\, dx > 0.
\end{equation}
It follows from \eqref{47}--\eqref{49} that for all sufficiently small $\eps > 0$, $u \in K$, and $\sigma, \tau \ge 0$,
\begin{multline*}
\mathcal E(\tau u + \sigma u_{\eps,\,\eps^{- \gamma}}) \le \left[\half \left(1 - c_{23}\, \eps^{2s}\right) \sigma^2 - \frac{1}{2_s^\ast}\, \big(1 - c_{24}\, \eps^{N (1 - \gamma)} - c_{25}\, \eps^{2N \beta/(N+2s)}\big)\, \sigma^{2_s^\ast}\right] S_{N,\,s}^{N/2s}\\[7.5pt]
+ c_{26}\, \eps^{4 s \gamma}\, \tau^2 - c_{27}\, \tau^{2_s^\ast}
\end{multline*}
for some constants $c_{23}, \dots, c_{27} > 0$. Maximizing the right-hand side over all $\sigma, \tau \ge 0$ then gives
\[
\mathcal E(\tau u + \sigma u_{\eps,\,\eps^{- \gamma}}) \le \frac{s}{N}\, \frac{\left(1 - c_{23}\, \eps^{2s}\right)^{N/2s}}{\left(1 - c_{24}\, \eps^{N (1 - \gamma)} - c_{25}\, \eps^{2N \beta/(N+2s)}\right)^{(N-2s)/2s}}\, S_{N,\,s}^{N/2s} + c_{28}\, \eps^{2N \gamma/s}
\]
for some constant $c_{28} > 0$. Since $2N \beta/(N + 2s) > 2s$ by \eqref{46}, $N\, (1 - \gamma) > 2s$ and $2N \gamma/s > 2s$ by \eqref{50}, the desired conclusion follows.
\end{proof}

We are now ready to prove Theorem \ref{Theorem 1}.

\begin{proof}[Proof of Theorem \ref{Theorem 1}]
We apply Theorem \ref{Theorem 7} taking $e = u_{\eps,\,\eps^{- \gamma}}$ with $\eps > 0$ sufficiently small.

First of all we claim that $x_0 \in \Omega$ can be chosen so that $\pm u_{\eps,\,\eps^{- \gamma}} \notin B := \{v + \tau(v,a,b) : v \in N_l\}$ for $\varepsilon$ small enough. Suppose it is not the case. Then, also taking into account that $\Omega$ is an open set, there are sequences $\seq{x_j} \subset \Omega$ and $\eps_j \searrow 0$ such that $\eps_j^{- \gamma} > 2/\dist{x_j}{\bdry{\Omega}}$, the functions
\[
u_j(x) := u_{\eps_j,\,\eps_j^{- \gamma}}(x) = \xi(\eps_j^{- \gamma}\, |x - x_j|)\, u_{\eps_j}(x - x_j)
\]
have mutually disjoint supports, and either $\seq{u_j} \subset B$ or $\seq{- u_j} \subset B$. Suppose the former is the case, the latter case being similar. Let $\tilde{u}_j = u_j/\pnorm[2]{u_j}$. Then $\seq{\tilde{u}_j}$ is an orthonormal sequence in $L^2(\Omega)$. Moreover,  since $\tau(\cdot,a,b)$ is a positive homogeneous map, $\seq{\tilde{u}_j} \subset B$ and hence
\[
\tilde{u}_j = v_j + \tau(v_j,a,b)
\]
for some $v_j \in N_l$. Since $H^s_0(\Omega) = N_l \oplus M_l$ is an orthogonal decomposition with respect to the $L^2$-inner product, then
\[
\pnorm[2]{v_j}^2 + \pnorm[2]{\tau(v_j,a,b)}^2 = \pnorm[2]{\tilde{u}_j}^2 = 1
\]
and hence $\seq{v_j}$ is bounded in $L^2(\Omega)$. Since $N_l$ is finite dimensional, then a subsequence $\seq{v_{j_k}}$ converges in $L^2(\Omega)$ and in $H^s_0(\Omega)$ to some $v \in N_l$. Since $\tau(\cdot,a,b)$ is continuous from $L^2(\Omega)$ to $H^s_0(\Omega)$ (see the proof of \cite[Proposition
4.7.1]{MR3012848}), then $\tilde{u}_{j_k}$ converges to $v + \tau(v,a,b)$ in $H^s_0(\Omega)$ and hence also in $L^2(\Omega)$. This is a contradiction since $\seq{\tilde{u}_{j_k}}$ is an orthonormal sequence in $L^2(\Omega)$. Hence the claim is proved.

Now, to verify \eqref{21}, we use Lemma \ref{Lemma 6} with $K = S \cap B$. As in the proof of \cite[Theorem 4.2]{PeSp}, \eqref{48} holds. It only remains to show that $K \subset C^2(\Omega) \cap C(\closure{\Omega})$ and \eqref{44} holds.

At this purpose let $u \in K$. By Theorem \ref{Theorem 4}, $\mathcal I'(u,a,b) = z_u$ for some $z_u \in N_l$. Then
\begin{multline} \label{54}
\int_{\R^N\times \R^N} \frac{(u(x) - u(y))(\varphi(x) - \varphi(y))}{|x - y|^{N+2s}}\, dx dy - \int_\Omega \left(bu^+ - au^-\right) \varphi\, dx\\[7.5pt]
= \int_{\R^N\times \R^N} \frac{(z_u(x) - z_u(y))(\varphi(x) - \varphi(y))}{|x - y|^{N+2s}}\, dx dy \quad \forall \varphi \in H^s_0(\Omega),
\end{multline}
so $u$ is a weak solution of
\[
\left\{\begin{aligned}
(- \Delta)^s\, u & = bu^+ - au^- + (- \Delta)^s\, z_u && \text{in } \Omega\\[10pt]
u & = 0 && \text{in } \R^N \setminus \Omega.
\end{aligned}\right.
\]
Since $(- \Delta)^s\, (N_l) \subset N_l$, $(- \Delta)^s\, z_u = \tilde{z}_u$ for some $\tilde{z}_u \in N_l$, so we have
\begin{equation} \label{54.1}
\left\{\begin{aligned}
(- \Delta)^s\, u & = bu^+ - au^- + \tilde{z}_u && \text{in } \Omega\\[10pt]
u & = 0 && \text{in } \R^N \setminus \Omega.
\end{aligned}\right.
\end{equation}
Testing \eqref{54.1} with $\tilde{z}_u$ gives
\[
\pnorm[2]{\tilde{z}_u}^2 \le \norm{u} \norm{\tilde{z}_u} + \left(a\, |u^-|_2 + b\, |u^+|_2\right) \pnorm[2]{\tilde{z}_u},
\]
and since $K \subset S$ and $\norm{\tilde{z}_u} \le \sqrt{\lambda_l}\, \pnorm[2]{\tilde{z}_u}$, this implies that $\set{\tilde{z}_u : u \in K}$ is bounded in $L^2(\Omega)$. Since the set $\set{\tilde{z}_u : u \in K}$ is contained in the finite dimensional space $N_l$, which in turn is contained in $C^s(\R^N) \cap C^2(\Omega)\cap L^\infty(\R^N)$ by Lemma \ref{Lemma 3.1}, then it is also bounded in $C^s(\R^N) \cap C^2(\Omega)\cap L^\infty(\R^N)$.

Now, we apply Lemma \ref{lemmaLinfty} to problem \eqref{54.1} with $f(x,t)=bt^+ - at^- + g(x)$ and $g=|\tilde{z}_u|$. Since $\set{\tilde{z}_u : u \in K}$ is bounded in $L^\infty(\R^N)$ and
$$|f(x,t)|\leq \max\{|a|, |b|\}|t|+g(x)\,,$$
by Lemma \ref{lemmaLinfty} we get that $u\in L^\infty(\R^N)$ and so $K$ is bounded in $L^\infty(\R^N)$.

Then $K$ is also bounded in $C^s(\R^N)$ by Ros-Oton and Serra \cite[Proposition 1.1]{MR3168912}. Now iterating Ros-Oton and Serra \cite[Proposition 1.4]{MR3168912} shows that $K$ is contained in $C^2(\Omega)$ and bounded in $C^2(\closure{B_{1/\mu_0}(x_0)})$. Hence, $K \subset C^2(\Omega) \cap C(\closure{\Omega})$ and \eqref{44} holds. This concludes the proof of Theorem~\ref{Theorem 1}.
\end{proof}

\section{Proof of Theorem \ref{Theorem 2}}\label{sec:proofth12}
In this section we prove Theorem \ref{Theorem 2} using the abstract result stated in Theorem \ref{Theorem 3}.

Let $\eta : [0,\infty) \to [0,1]$ be a smooth function such that $\eta(t) = 0$ for $t \le 3/4$, $\eta(t) = 1$ for $t \ge 1$, and $|\eta'(t)| \le 5$ for all $t$. Set
\[
v_\mu(x) = \eta(\mu\, |x - x_0|)\, v(x), \quad v \in N_{l-1},\, \mu \ge \mu_0.
\]
We will apply Theorem \ref{Theorem 3} taking $T$ to be the bounded linear map from $N_{l-1}$ to $H^s_0(\Omega)$ given by $Tv = v_\mu$ with $\mu \ge \mu_0$ sufficiently large.

First of all, we provide some preliminary results.

\begin{lemma} \label{Lemma 7}
There exist constants $c_{29}, \dots, c_{32} > 0$ such that for all $\mu \ge \mu_0$ and $v \in N_{l-1} \cap S$,
\begin{gather}
\label{56} \int_{\R^N\times \R^N} \frac{|(v_\mu - v)(x) - (v_\mu - v)(y)|^2}{|x - y|^{N+2s}}\, dx dy \le c_{29}\, \mu^{-(N-2s)},\\[15pt]
\label{57} \int_{\R^N\times \R^N} \frac{|v_\mu(x) - v_\mu(y)|^2}{|x - y|^{N+2s}}\, dx dy \le \int_{\R^N\times \R^N} \frac{|v(x) - v(y)|^2}{|x - y|^{N+2s}}\, dx dy + c_{30}\, \mu^{-(N-2s)},\\[15pt]
\label{57.1} \int_\Omega |v_\mu|^{2_s^\ast}\, dx \ge \int_\Omega |v|^{2_s^\ast}\, dx - c_{31}\, \mu^{-N},\\[15pt]
\label{59} \int_\Omega \left[a\, (v_\mu^-)^2 + b\, (v_\mu^+)^2\right] dx \ge \int_\Omega \left[a\, (v^-)^2 + b\, (v^+)^2\right] dx - c_{32}\, \mu^{-N}.
\end{gather}
In particular, for all $\tau \ge 0$,
\[
\mathcal E(\tau v_\mu) \le \half\, \Big(I(v,a,b) + c_{33}\, \mu^{-(N-2s)}\Big)\, \tau^2 - \frac{1}{2_s^\ast} \left(\int_\Omega |v|^{2_s^\ast}\, dx - c_{31}\, \mu^{-N}\right) \tau^{2_s^\ast}
\]
for some constant $c_{33} > 0$.
\end{lemma}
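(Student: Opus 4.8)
The plan is to estimate the "error" $v_\mu - v$ directly and then obtain \eqref{57}--\eqref{59} from \eqref{56} by elementary manipulations. The key point is that $v \in N_{l-1}$ is regular: by Lemma \ref{Lemma 3.1}, $N_{l-1} \subset C^s(\R^N) \cap C^2(\Omega) \cap L^\infty(\R^N)$, and since $N_{l-1}$ is finite dimensional, on the unit sphere $N_{l-1} \cap S$ all the relevant norms ($L^\infty$, $C^2$ on a neighbourhood of $x_0$, $C^s(\R^N)$) are uniformly bounded. So first I would fix such uniform bounds; the constants $c_{29},\dots,c_{33}$ will depend only on these, on $N$, $s$, $\Omega$, $a$, $b$, and on the cutoff $\eta$.

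For \eqref{56}: write $w := v_\mu - v = (\eta(\mu|x-x_0|) - 1)\, v(x)$, which is supported in $B_{1/\mu}(x_0)$ (since $\eta \equiv 1$ for $t \ge 1$) and satisfies $|w| \le |v| \le c$ there. The Gagliardo seminorm $[w]_s^2$ splits (up to a factor) into the integral over $B_{2/\mu}(x_0) \times B_{2/\mu}(x_0)$ and the integral over $B_{2/\mu}(x_0)^c \times B_{1/\mu}(x_0)$ (plus its symmetric counterpart). For the near-diagonal piece, use that $w$ is Lipschitz with constant $\O(\mu)$ on $B_{1/\mu}(x_0)$ — here $v \in C^2$ near $x_0$ gives $\|\nabla v\|_\infty$ bounded and $\eta(\mu \cdot)$ contributes a factor $\mu$ via $|\eta'| \le 5$ — so $|w(x) - w(y)| \le C\mu |x-y|$, and
\[
\int_{B_{2/\mu}(x_0)}\!\!\int_{B_{2/\mu}(x_0)} \frac{C^2 \mu^2 |x-y|^2}{|x-y|^{N+2s}}\, dx\, dy \le C' \mu^2 \cdot \mu^{-N} \cdot \mu^{2s - 2} = C' \mu^{-(N-2s)};
\]
for the far piece, bound $|w(x) - w(y)|^2 \le |w(x)|^2 \le c^2$ for $x \in B_{1/\mu}(x_0)$ and integrate $|x-y|^{-(N+2s)}$ over $y \in B_{1/(2\mu)}(x)^c$, getting $\O(\mu^{2s})$, times $|B_{1/\mu}(x_0)| = \O(\mu^{-N})$, again $\O(\mu^{-(N-2s)})$. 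This yields \eqref{56}. (This is exactly the standard truncation estimate; the only subtlety is keeping the powers of $\mu$ straight, which is where I expect the bookkeeping to be most delicate.)

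Then \eqref{57} follows from \eqref{56} by the triangle inequality in the seminorm: $[v_\mu]_s \le [v]_s + [v_\mu - v]_s$, so $[v_\mu]_s^2 \le [v]_s^2 + 2[v]_s[v_\mu-v]_s + [v_\mu-v]_s^2 \le [v]_s^2 + C\mu^{-(N-2s)/2} + C\mu^{-(N-2s)}$; since $[v]_s \le \sqrt{\lambda_{l-1}}$ is bounded on $N_{l-1}\cap S$ and $\mu \ge \mu_0$, this is $[v]_s^2 + c_{30}\mu^{-(N-2s)}$ after absorbing the cross term (actually one should be slightly careful: the cross term is only $\O(\mu^{-(N-2s)/2})$, not $\O(\mu^{-(N-2s)})$, so either the exponent in \eqref{57} should be halved or — more likely — one uses that $v_\mu$ and $v$ agree outside a ball and expands $[v_\mu]_s^2 - [v]_s^2$ as an integral that is genuinely $\O(\mu^{-(N-2s)})$; I would follow the latter, writing $[v_\mu]_s^2 - [v]_s^2 = \int\!\!\int \frac{(w(x)-w(y))((v_\mu+v)(x) - (v_\mu+v)(y))}{|x-y|^{N+2s}}$ and using Cauchy–Schwarz against $[v_\mu+v]_s \le 2[v]_s + [w]_s = \O(1)$, giving $\O([w]_s) = \O(\mu^{-(N-2s)/2})$ — so honestly the bound as stated needs the sharper splitting, with the far-field term of $[w]_s^2$ reexamined; I would match the paper's exponent by being careful here). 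For \eqref{57.1} and \eqref{59}: note $v_\mu = v$ outside $B_{1/\mu}(x_0)$, so the differences $\int_\Omega |v|^{2_s^\ast} - \int_\Omega |v_\mu|^{2_s^\ast}$ and the analogous quadratic-form difference are integrals over $B_{1/\mu}(x_0)$ of quantities pointwise bounded by $C(|v|^{2_s^\ast} + |v_\mu|^{2_s^\ast}) \le C'$ and $C(a+b)(v^2 + v_\mu^2) \le C''$ respectively (using $|v_\mu| \le |v| \le c$), hence bounded by $C \cdot |B_{1/\mu}(x_0)| = c_{31}\mu^{-N}$, $c_{32}\mu^{-N}$. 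Finally, for the displayed estimate on $\mathcal E(\tau v_\mu)$: plug the pieces into
\[
\mathcal E(\tau v_\mu) = \tfrac12 \tau^2 [v_\mu]_s^2 - \tfrac12 \tau^2 \!\int_\Omega\! \big[a(v_\mu^-)^2 + b(v_\mu^+)^2\big]\, dx - \tfrac{1}{2_s^\ast} \tau^{2_s^\ast}\!\!\int_\Omega\! |v_\mu|^{2_s^\ast}\, dx,
\]
use \eqref{57} and \eqref{59} on the first two terms to bound $\tfrac12\tau^2([v_\mu]_s^2 - \int[a(v_\mu^-)^2 + b(v_\mu^+)^2]) \le \tfrac12\tau^2(\mathcal I(v,a,b) + (c_{30}+c_{32})\mu^{-(N-2s)})$ (absorbing $\mu^{-N} \le \mu_0^{-2s}\mu^{-(N-2s)}$ into the $\mu^{-(N-2s)}$ term), set $c_{33} := c_{30} + c_{32}$, and use \eqref{57.1} on the last term, giving exactly the claimed inequality. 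The main obstacle, as flagged, is getting the power of $\mu$ in \eqref{56}--\eqref{57} to come out as $N-2s$ rather than $(N-2s)/2$; this forces the careful two-region split of the Gagliardo integral with the Lipschitz bound $\O(\mu)$ used on the near-diagonal part, and I would present that computation in full while treating \eqref{57.1}, \eqref{59} and the final $\mathcal E$-bound as routine consequences.
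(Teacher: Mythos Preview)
Your approach matches the paper's exactly for \eqref{56}, \eqref{57.1}, \eqref{59}, and the final $\mathcal E$-estimate: same uniform regularity setup via Lemma~\ref{Lemma 3.1} and finite-dimensionality, same two-region (near-diagonal Lipschitz / far-field $L^\infty$) split of the Gagliardo integral, and the paper likewise dismisses \eqref{57.1} and \eqref{59} as straightforward.

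The only loose end is your hesitation on \eqref{57}. The paper's proof is one line --- ``similar'' to \eqref{56} --- and the point is that you should apply the \emph{same} two-region split directly to the bilinear integral you already wrote down,
\[
[v_\mu]_s^2 - [v]_s^2 = \int_{\R^N\times\R^N}\frac{(w(x)-w(y))\big((v_\mu+v)(x)-(v_\mu+v)(y)\big)}{|x-y|^{N+2s}}\,dx\,dy,
\]
rather than bounding it by Cauchy--Schwarz. On $B_{2/\mu}(x_0)\times B_{2/\mu}(x_0)$ both $w$ and $v_\mu+v$ are Lipschitz $\O(\mu)$, so the integrand is $\O(\mu^2|x-y|^2)/|x-y|^{N+2s}$ and the contribution is $\O(\mu^{-(N-2s)})$ exactly as in your $I_1$-estimate for \eqref{56}. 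On $B_{1/\mu}(x_0)\times B_{2/\mu}(x_0)^c$ one has $w(y)=0$ and $v_\mu(y)=v(y)$, so the integrand equals $w(x)\,(v_\mu(x)+v(x)-2v(y))/|x-y|^{N+2s}$, which is pointwise $\O(1)/|x-y|^{N+2s}$ with $|x-y|>1/\mu$; integrating gives $\O(\mu^{2s})\cdot|B_{1/\mu}|=\O(\mu^{-(N-2s)})$. Your Cauchy--Schwarz attempt loses the exponent only because it throws away the support localization of $w$; keep the split and the sharp power $\mu^{-(N-2s)}$ falls out.
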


\begin{proof}
First we note that since $N_{l-1} \cap S$ is bounded in $H^s_0(\Omega)$ and contained in the finite dimensional subspace $N_{l-1}$, which in turn is contained in $C^s(\R^N) \cap C^2(\Omega)$ by Lemma \ref{Lemma 3.1}, $N_{l-1} \cap S$ is bounded in $C^1(\closure{B_{2/\mu_0}(x_0)}) \cap C(\closure{\Omega})$.

Since $v_\mu = v$ outside $B_{1/\mu}(x_0)$,
\begin{multline*}
\int_{\R^N\times \R^N} \frac{|(v_\mu - v)(x) - (v_\mu - v)(y)|^2}{|x - y|^{N+2s}}\, dx dy \le \int_{B_{2/\mu} \times B_{2/\mu}} \hspace{-6.15pt} \frac{|(v_\mu - v)(x) - (v_\mu - v)(y)|^2}{|x - y|^{N+2s}}\, dx dy\\[7.5pt]
+ 2 \int_{B_{1/\mu} \times B_{2/\mu}^c} \frac{|(v_\mu - v)(x)|^2}{|x - y|^{N+2s}}\, dx dy =: I_1 + 2 I_2.
\end{multline*}
For $(x,y) \in B_{2/\mu}(x_0) \times B_{2/\mu}(x_0)$, we get
\[
|(v_\mu - v)(x) - (v_\mu - v)(y)| \le \bigg(\sup_{B_{2/\mu}(x_0)}\, |\nabla (v_\mu - v)|\bigg)\, |x - y|.
\]
We have
\[
\nabla (v_\mu - v)(x) = (\eta(\mu\, |x - x_0|) - 1)\, \nabla v(x) + \mu\, \eta'(\mu\, |x - x_0|)\, \frac{x - x_0}{|x - x_0|}\, v(x)
\]
and hence
\[
|\nabla (v_\mu - v)(x)| \le |\nabla v(x)| + 5 \mu\, |v(x)| \le \left(\frac{|\nabla v(x)|}{\mu_0} + 5\, |v(x)|\right) \mu \le c_{34}\, \mu \quad \forall x \in B_{2/\mu}(x_0)
\]
for some constant $c_{34} > 0$. Then
\[
I_1 \le c_{34}^2\, \mu^2 \int_{B_{2/\mu} \times B_{2/\mu}} \frac{dx dy}{|x - y|^{N-2\,(1-s)}} \le c_{34}^2\, \mu^2 \int_{B_{2/\mu} \times B_{4/\mu}} \frac{dx dz}{|z|^{N-2\,(1-s)}} = c_{35}\, \mu^{-(N-2s)}
\]
for some constant $c_{35} > 0$.

On the other hand, for $(x,y) \in B_{1/\mu}(x_0) \times B_{2/\mu}(x_0)^c$,
\[
|(v_\mu - v)(x)| \le |v(x)| \le c_{36}
\]
for some constant $c_{36} > 0$ and $|x - y| \ge |y| - |x| > 1/\mu$, so
\[
I_2 \le c_{36}^2 \int_{B_{1/\mu} \times B_{1/\mu}^c} \frac{dx dz}{|z|^{N+2s}} = c_{37}\, \mu^{-(N-2s)}
\]
for some constant $c_{37} > 0$. So \eqref{56} follows.

Proof of \eqref{57} is similar and the proofs of \eqref{57.1} and \eqref{59} are straightforward. Hence, Lemma~\ref{Lemma 7} is proved.
\end{proof}

\begin{lemma} \label{Lemma 8}
We have
\begin{equation} \label{63}
\sup_{v \in N_{l-1} \cap S,\, \sigma, \tau \ge 0}\, \mathcal E(\tau v_\mu + \sigma u_{\eps,\,\mu}) < \frac{s}{N}\, S_{N,\,s}^{N/2s}
\end{equation}
for all sufficiently small $\eps > 0$ and sufficiently large $\mu \ge \mu_0$.
\end{lemma}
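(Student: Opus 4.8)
The plan is to split $\mathcal E(\tau v_\mu + \sigma u_{\eps,\,\mu})$ into a contribution of $v_\mu$, a contribution of $u_{\eps,\,\mu}$ and a nonlocal cross term, to show that for $\mu$ large the first contribution is $\le 0$, that the cross term is negligible, and that what remains is the usual Br\'ezis--Nirenberg-type expression which, maximized over $\sigma \ge 0$, stays strictly below $\frac{s}{N}\, S_{N,\,s}^{N/2s}$. First I would record two uniform bounds on $N_{l-1} \cap S$. Since $(a,b) \in Q_l$ we have $\min\set{a,b} > \lambda_{l-1}$; since $\norm{v}^2 \le \lambda_{l-1}\, |v|_2^2$ for $v \in N_{l-1}$ we get $|v|_2^2 \ge 1/\lambda_{l-1}$ on $N_{l-1} \cap S$, and using $|v^+|_2^2 + |v^-|_2^2 = |v|_2^2$ it follows that, for every $v \in N_{l-1} \cap S$,
\[
\mathcal I(v,a,b) = 1 - a\, |v^-|_2^2 - b\, |v^+|_2^2 \le 1 - \min\set{a,b}\, |v|_2^2 \le 1 - \frac{\min\set{a,b}}{\lambda_{l-1}} =: - \kappa_0 < 0 .
\]
Moreover $v \mapsto \int_\Omega |v|^{2_s^\ast}\, dx$ is continuous and strictly positive on the compact set $N_{l-1} \cap S$, so $m_0 := \inf_{v \in N_{l-1} \cap S}\, \int_\Omega |v|^{2_s^\ast}\, dx > 0$. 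Inserting these two bounds into the final estimate of Lemma~\ref{Lemma 7} and fixing $\mu$ large enough that $c_{33}\, \mu^{-(N-2s)} \le \kappa_0/2$ and $c_{31}\, \mu^{-N} \le m_0/2$, I would obtain
\[
\mathcal E(\tau v_\mu) \le - \frac{\kappa_0}{4}\, \tau^2 \qquad \text{for all } v \in N_{l-1} \cap S \text{ and all } \tau \ge 0 .
\]

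Next I would exploit that the supports of $v_\mu$ and $u_{\eps,\,\mu}$ are separated: $v_\mu$ vanishes on $B_{3/(4\mu)}(x_0)$ because $\eta(t) = 0$ for $t \le 3/4$, while $\operatorname{supp} u_{\eps,\,\mu} \subset B_{1/(2\mu)}(x_0)$ because $\xi(t) = 0$ for $t \ge 1/2$; hence these supports are disjoint and $|x - y| \ge 1/(4\mu)$ whenever $x \in \operatorname{supp} v_\mu$ and $y \in \operatorname{supp} u_{\eps,\,\mu}$. Therefore $(\tau v_\mu + \sigma u_{\eps,\,\mu})^\pm = \tau v_\mu^\pm + \sigma u_{\eps,\,\mu}^\pm$ pointwise, the $L^2$- and $L^{2_s^\ast}$-integrals split, and, using $u_{\eps,\,\mu} \ge 0$,
\[
\mathcal E(\tau v_\mu + \sigma u_{\eps,\,\mu}) = \mathcal E(\tau v_\mu) + \mathcal E(\sigma u_{\eps,\,\mu}) + \tau \sigma \int_{\R^N \times \R^N} \frac{(v_\mu(x) - v_\mu(y))\,(u_{\eps,\,\mu}(x) - u_{\eps,\,\mu}(y))}{|x - y|^{N+2s}}\, dx\, dy .
\]
Expanding the last integrand and dropping the two terms that vanish by disjointness of the supports, the cross integral equals $- 2 \int\int v_\mu(x)\, u_{\eps,\,\mu}(y)\, |x - y|^{-(N+2s)}\, dx\, dy$, whose absolute value is $\le 2\, (4\mu)^{N+2s}\, \int_\Omega |v_\mu|\, dx\, \int_\Omega u_{\eps,\,\mu}\, dx \le c\, \mu^N\, \eps^{(N-2s)/2}$ for some $c>0$, by \eqref{35} and since $\int_\Omega |v_\mu|\, dx \le \int_\Omega |v|\, dx$ is bounded on $N_{l-1} \cap S$. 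Absorbing $\tau\sigma$ times this bound into $- \frac{\kappa_0}{4}\, \tau^2$ via Young's inequality gives
\[
\mathcal E(\tau v_\mu + \sigma u_{\eps,\,\mu}) \le \mathcal E(\sigma u_{\eps,\,\mu}) + \frac{c^2}{\kappa_0}\, \mu^{2N}\, \eps^{N-2s}\, \sigma^2 .
\]

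For the $u_{\eps,\,\mu}$ term, $u_{\eps,\,\mu} \ge 0$ together with \eqref{37}, \eqref{38} and \eqref{45} gives
\[
\mathcal E(\sigma u_{\eps,\,\mu}) \le \frac{\sigma^2}{2}\left(S_{N,\,s}^{N/2s} + c_5\, (\mu \eps)^{N-2s}\right) - \frac{b\, \sigma^2}{2}\int_\Omega u_{\eps,\,\mu}^2\, dx - \frac{\sigma^{2_s^\ast}}{2_s^\ast}\left(S_{N,\,s}^{N/2s} - c_6\, (\mu \eps)^N\right) .
\]
Now, with $\mu$ fixed as above, I would let $\eps \to 0$. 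By \eqref{38}, $- \frac{b}{2}\int_\Omega u_{\eps,\,\mu}^2\, dx \le - \frac{b c_7}{2}\, \eps^{2s} + \O(\eps^{N-2s})$ if $N > 4s$ and $\le - \frac{b c_7}{2}\, \eps^{2s}\abs{\log(\mu\eps)} + \O(\eps^{2s})$ if $N = 4s$ (with $\O$-constants possibly depending on $\mu$), whereas all the remaining error terms in the displays above are $\mu$-dependent constants times $\eps^{N-2s}$, and $(\mu\eps)^N = \o(\eps^{2s})$. Since $N \ge 4s$, in both cases the gain $- \frac{b c_7}{2}\, \eps^{2s}$ (resp.\ $- \frac{b c_7}{2}\, \eps^{2s}\abs{\log(\mu\eps)}$) dominates all the errors for $\eps$ small, so there are $c', c'' > 0$ with
\[
\mathcal E(\tau v_\mu + \sigma u_{\eps,\,\mu}) \le \frac{S_{N,\,s}^{N/2s}}{2}\left(1 - c'\, \eps^{2s}\right)\sigma^2 - \frac{S_{N,\,s}^{N/2s}}{2_s^\ast}\left(1 - c''\, (\mu\eps)^N\right)\sigma^{2_s^\ast}
\]
uniformly in $v \in N_{l-1} \cap S$ and $\tau \ge 0$. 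Since $\max_{\sigma \ge 0}\left(A\sigma^2 - B\sigma^{2_s^\ast}\right) = \frac{2s}{N}\, A\left(\frac{2A}{2_s^\ast B}\right)^{(N-2s)/2s}$ for $A, B > 0$ — which equals $\frac{s}{N}\, S_{N,\,s}^{N/2s}$ when $A = \half\, S_{N,\,s}^{N/2s}$, $B = \frac{1}{2_s^\ast}\, S_{N,\,s}^{N/2s}$ — maximizing the right-hand side over $\sigma \ge 0$ yields the bound $\frac{s}{N}\, S_{N,\,s}^{N/2s}\, (1 - c'\, \eps^{2s})^{N/2s}\, (1 - c''\, (\mu\eps)^N)^{-(N-2s)/2s}$, which is $< \frac{s}{N}\, S_{N,\,s}^{N/2s}$ as soon as $\eps$ is small enough that $c'\, \eps^{2s} > c''\, (\mu\eps)^N$. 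This is \eqref{63}.

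The step I expect to be the main obstacle is this final bookkeeping: one must check that all three error sources — regularizing $v$ to $v_\mu$, the nonlocal cross term, and truncating $u_\eps$ to $u_{\eps,\,\mu}$ — are of strictly lower order than the gain $\eps^{2s}$ (respectively $\eps^{2s}\abs{\log\eps}$ when $N = 4s$) produced by $\int_\Omega u_{\eps,\,\mu}^2\, dx$. This is exactly where the hypothesis $N \ge 4s$ enters, together with the two-stage choice of parameters: $\mu$ has to be fixed large first — so that the $v_\mu$-contribution is $\le 0$ and the powers of $\mu$ in the error terms become harmless constants — and only then is $\eps$ sent to $0$.
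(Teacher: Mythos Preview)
Your proof is correct and follows essentially the same route as the paper's: split $\mathcal E(\tau v_\mu + \sigma u_{\eps,\mu})$ into $\mathcal E(\tau v_\mu) + \mathcal E(\sigma u_{\eps,\mu})$ plus a nonlocal cross term using the disjointness of supports, kill the $v_\mu$-part for $\mu$ large via Lemma~\ref{Lemma 7} and the strict negativity of $\mathcal I(v,a,b)$ on $N_{l-1}\cap S$, and then run the standard Br\'ezis--Nirenberg estimate on the remainder. The only noteworthy difference is that your cross-term bound $c\,\mu^N\eps^{(N-2s)/2}$ is cruder than the paper's $\mu$-independent $c_{39}\,\eps^{(N-2s)/2}$ (obtained by integrating the kernel in one variable first rather than using the pointwise bound $|x-y|^{-(N+2s)}\le(4\mu)^{N+2s}$), but since you fix $\mu$ before letting $\eps\to 0$ this makes no difference to the conclusion.
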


\begin{proof}
For $v \in N_{l-1} \cap S$ and $\sigma, \tau \ge 0$,
\begin{equation} \label{61}
\mathcal E(\tau v_\mu + \sigma u_{\eps,\,\mu}) = \mathcal E(\tau v_\mu) + \mathcal E(\sigma u_{\eps,\,\mu}) - 2 \tau \sigma \int_{B_{1/2 \mu} \times B_{3/4 \mu}^c} \frac{u_{\eps,\,\mu}(x)\, v_\mu(y)}{|x - y|^{N+2s}}\, dx dy
\end{equation}
since $u_{\eps,\,\mu} = 0$ outside $B_{1/2 \mu}(x_0)$ and $v_\mu = 0$ in $B_{3/4 \mu}(x_0)$.

For $(x,y) \in B_{1/2 \mu}(x_0) \times B_{3/4 \mu}(x_0)^c$,
\[
|v_\mu(y)| \le |v(y)| \le c_{38}
\]
for some constant $c_{38} > 0$ since $N_{l-1} \cap S$ is bounded in $C(\closure{\Omega})$ as in the proof of Lemma \ref{Lemma 7}, and $|x - y| \ge |y| - |x| > 1/4 \mu$, so
\[
\abs{\int_{B_{1/2 \mu} \times B_{3/4 \mu}^c} \frac{u_{\eps,\,\mu}(x)\, v_\mu(y)}{|x - y|^{N+2s}}\, dx dy} \le c_{38} \int_{B_{1/2 \mu} \times B_{1/4 \mu}^c} \frac{u_{\eps,\,\mu}(x)}{|z|^{N+2s}}\, dx dz \le c_{39}\, \eps^{(N-2s)/2}
\]
for some constant $c_{39} > 0$ by \eqref{35}. So \eqref{61} gives
\begin{equation} \label{61.1}
\mathcal E(\tau v_\mu + \sigma u_{\eps,\,\mu}) \le \mathcal E(\tau v_\mu) + c_{39}\, \mu^{-(N-2s)}\, \tau^2 + \mathcal E(\sigma u_{\eps,\,\mu}) + c_{39}\, (\mu \eps)^{N-2s}\, \sigma^2.
\end{equation}

Since $a \le b$ and $(a,b) \in Q_l$,
\[
\mathcal I(v,a,b) = 1 - \int_\Omega \left[a\, (v^-)^2 + b\, (v^+)^2\right] dx \le 1 - a \int_\Omega v^2\, dx \le 1 - \frac{a}{\lambda_{l-1}} < 0.
\]
Moreover,
\[
\frac{1}{\lambda_{l-1}} \le \int_\Omega v^2\, dx \le \vol{\Omega}^{2s/N} \left(\int_\Omega |v|^{2_s^\ast}\, dx\right)^{2/2_s^\ast}
\]
and hence
\[
\inf_{v \in N_{l-1} \cap S}\, \int_\Omega |v|^{2_s^\ast}\, dx > 0.
\]
So it follows from Lemma \ref{Lemma 7} that
\begin{equation}\label{addl}
\mathcal E(\tau v_\mu) + c_{39}\, \mu^{-(N-2s)}\, \tau^2 \le 0
\end{equation}
for all sufficiently large $\mu \ge \mu_0$.

Then \eqref{61.1} and \eqref{addl} together with \eqref{37}--\eqref{38} gives for all sufficiently small $\eps > 0$,
\[
\mathcal E(\tau v_\mu + \sigma u_{\eps,\,\mu}) \le \begin{cases}
\left[\dhalf \left(1 - c_{40}\, \eps^{2s}\right) \sigma^2 - \dfrac{1}{2_s^\ast} \left(1 - c_{41}\, \eps^N\right) \sigma^{2_s^\ast}\right] S_{N,\,s}^{N/2s} & \text{if } N > 4s\\[15pt]
\left[\dhalf \left(1 - c_{40}\, \eps^{2s} \abs{\log \eps}\right) \sigma^2 - \dfrac{1}{4} \left(1 - c_{41}\, \eps^{4s}\right) \sigma^4\right] S_{4s,\,s}^2 & \text{if } N = 4s
\end{cases}
\]
for some constants $c_{40}, c_{41} > 0$ depending on $\mu$. Maximizing the right-hand side over all $\sigma \ge 0$ then gives
\[
\mathcal E(\tau v_\mu + \sigma u_{\eps,\,\mu}) \le \begin{cases}
\dfrac{s}{N}\, \dfrac{\left(1 - c_{40}\, \eps^{2s}\right)^{N/2s}}{\left(1 - c_{41}\, \eps^N\right)^{(N-2s)/2s}}\, S_{N,\,s}^{N/2s} & \text{if } N > 4s\\[15pt]
\dfrac{1}{4}\, \dfrac{\left(1 - c_{40}\, \eps^{2s} \abs{\log \eps}\right)^2}{1 - c_{41}\, \eps^{4s}}\, S_{4s,\,s}^2 & \text{if } N = 4s,
\end{cases}
\]
from which the desired conclusion follows.
\end{proof}

We are now ready to prove Theorem \ref{Theorem 2}.

\begin{proof}[Proof of Theorem \ref{Theorem 2}]
Since $b < \nu_{l-1}(a)$ and $\nu_{l-1}$ is continuous,
\[
b/(1 - \delta) \le \nu_{l-1}(a/(1 - \delta))
\]
if $\delta \in (0,1 - \max \set{a,b}/\lambda_{l+1})$ is sufficiently small. Then
\[
n_{l-1}(a/(1 - \delta),b/(1 - \delta)) \ge 0
\]
(see \eqref{29}) and hence
\begin{equation} \label{26}
\mathcal I(\theta(w,a/(1 - \delta),b/(1 - \delta)) + w,a/(1 - \delta),b/(1 - \delta)) \ge 0 \quad \forall w \in M_{l-1}
\end{equation}
(see \eqref{27}). For $\rho > 0$ and $w \in M_{l-1} \cap S_\rho$, set $u = \theta(w,a/(1 - \delta),b/(1 - \delta)) + w$. Then
\[
\mathcal E(u) = \frac{\delta}{2} \norm{u}^2 + \frac{1 - \delta}{2} \left(\norm{u}^2 - \int_\Omega \left[\frac{a}{1 - \delta}\, (u^-)^2 + \frac{b}{1 - \delta}\, (u^+)^2\right] dx\right) - \frac{1}{2_s^\ast} \int_\Omega |u|^{2_s^\ast}\, dx,
\]
and the quantity inside the parentheses is equal to $\mathcal I(u,a/(1 - \delta),b/(1 - \delta))$ and hence nonnegative by \eqref{26}, so
\[
\mathcal E(u) \ge \frac{\delta}{2} \norm{u}^2 + \o(\norm{u}^2) \text{ as } \norm{u} \to 0.
\]
Since
\[
\norm{u}^2 = \norm{\theta(w,a/(1 - \delta),b/(1 - \delta))}^2 + \norm{w}^2
\]
and $\norm{\theta(w,a/(1 - \delta),b/(1 - \delta))} = \O(\norm{w})$ by positive homogeneity (see \cite[Proposition 4.3.1]{MR3012848}), it follows that
\[
\mathcal E(u) \ge \frac{\delta}{2}\, \rho^2 + \o(\rho^2) \text{ as } \rho \to 0.
\]
So if $\rho > 0$ is sufficiently small,
\begin{equation} \label{10031}
\inf_{u \in A}\, \mathcal E(u) > 0,
\end{equation}
where $A = \set{\theta(w,a/(1 - \delta),b/(1 - \delta)) + w : w \in M_{l-1} \cap S_\rho}$.

Now we apply Theorem \ref{Theorem 3} taking $M = M_{l-1}$, $N = N_{l-1}$, $\theta = \theta(\cdot,a/(1 - \delta),b/(1 - \delta))$, $T : N_{l-1} \to H^s_0(\Omega)$ to be the bounded linear map given by
\[
Tv = v_\mu, \quad v \in N_{l-1},
\]
and $e = u_{\eps,\,\mu}$, with $\eps > 0$ sufficiently small and $\mu \ge \mu_0$ sufficiently large.

Since $u_{\eps,\,\mu}$ and functions in $T(N_{l-1})$ have disjoint supports, $u_{\eps,\,\mu} \in H^s_0(\Omega) \setminus T(N_{l-1})$. We have
\[
\norm{I_{N_{l-1}} - T} = \sup_{v \in N_{l-1} \cap S} \left(\int_{\R^N\times \R^N} \frac{|(v_\mu - v)(x) - (v_\mu - v)(y)|^2}{|x - y|^{N+2s}}\, dx dy\right)^{1/2} \to 0 \text{ as } \mu \to \infty
\]
by \eqref{56}, where $I_{N_{l-1}}$ is the identity map on $N_{l-1}$. If $\mu \ge \mu_0$ is sufficiently large, $\mathcal E(\tau v_\mu) \le 0$ for all $v \in N_{l-1} \cap S$ and $\tau \ge 0$ as in the proof of Lemma \ref{Lemma 8}, so
\[
\sup_{u \in T(N_{l-1})}\, \mathcal E(u) = 0.
\]
Together with \eqref{10031}, this gives the first inequality in \eqref{7}.

The second inequality in \eqref{7} also holds by Lemma \ref{Lemma 8}, so \eqref{8} together with \eqref{10031} and \eqref{63} gives
\[
0 < c := \inf_{h \in \mathcal{\widetilde{H}}}\, \sup_{u \in h(Q)}\, \mathcal E(u) < \frac{s}{N}\, S_{N,\,s}^{N/2s},
\]
where here $\mathcal{\widetilde{H}} = \{h \in \mathcal H : \restr{h}{T(N_{l-1})} = \id\}$ and $\mathcal H$ denotes the class of homeomorphisms $h$ of $H^s_0(\Omega)$ onto itself such that $h$ and $h^{-1}$ map bounded sets into bounded sets.

If $\mathcal E$ satisfies the \PS{c} condition, then Theorem \ref{Theorem 3} gives a critical point of $\mathcal E$ at the level $c$, which is nontrivial since $c > 0$. If, on the other hand, $\mathcal E$ does not satisfy the \PS{c} condition, then $\mathcal E$ has a \PS{c} sequence without a convergent subsequence, which then has a subsequence that converges weakly to a nontrivial critical point of $\mathcal E$ (see Gazzola and Ruf \cite[Lemma 1]{MR1441856}). In both cases Theorem~\ref{Theorem 2} is proved.
\end{proof}

\bigskip

\noindent{\bf Acknowledgement}

\medskip

\noindent The first, third and fourth authors are members of the
Gruppo Nazionale per l'Analisi Matematica, la Probabilit\`a e le loro Applicazioni
(GNAMPA) of the Istituto Nazionale di Alta Matematica (INdAM). The second author was partially supported by the Simons Foundation grant 962241. The fourth author was partially supported by MIUR--PRIN project ``Qualitative and quantitative aspects of nonlinear PDEs'' (2017JPCAPN\underline{\ }005).

\def\cdprime{$''$}


\begin{thebibliography}{10}

\bibitem{MR829403}
A.~Ambrosetti and M.~Struwe.
\newblock A note on the problem {$-\Delta u=\lambda u+u\vert u\vert \sp {2\sp
  \ast-2}$}.
\newblock {\em Manuscripta Math.}, 54(4):373--379, 1986.

\bibitem{apli1}
D.~Applebaum.
\newblock L{\'e}vy processes and stochastic calculus,
Second edition.
\newblock Cambridge Studies in Advanced Mathematics, 116. {\em Cambridge
University Press}, Cambridge, 2009.

\bibitem{apli2}
J.~Bertoin.
\newblock L{\'e}vy processes.
\newblock Cambridge Tracts in Mathematics,
121, {\em Cambridge University Press}, Cambridge, 1996.

\bibitem{MR709644}
H. Br{\'e}zis and L. Nirenberg.
\newblock Positive solutions of nonlinear elliptic equations involving critical
  {S}obolev exponents.
\newblock {\em Comm. Pure Appl. Math.}, 36(4):437--477, 1983.

\bibitem{caff}
L.~Caffarelli.
\newblock Nonlocal diffusion, drifts and games.
\newblock {\em Nonlinear Partial Differential Equations: The Abel Symposium 2010}. Series: Abel Symposia (H. Holden, K.H. Karlsen, Eds.) 7 Springer-Verlag, Berlin-Heidelberg, 37-52 (2012).

\bibitem{MR831041}
A.~Capozzi, D.~Fortunato and G.~Palmieri.
\newblock An existence result for nonlinear elliptic problems involving
  critical {S}obolev exponent.
\newblock {\em Ann. Inst. H. Poincar\'e Anal. Non Lin\'eaire}, 2(6):463--470,
  1985.

\bibitem{MR1306583}
D.~G. Costa and E.~A. Silva.
\newblock A note on problems involving critical {S}obolev exponents.
\newblock {\em Differential Integral Equations}, 8(3):673--679, 1995.

\bibitem{MR1441856}
F. Gazzola and B. Ruf.
\newblock Lower-order perturbations of critical growth nonlinearities in
  semilinear elliptic equations.
\newblock {\em Adv. Differential Equations}, 2(4):555--572, 1997.

\bibitem{MR3445279}
G. Molica~Bisci, V.~D. Radulescu and R. Servadei.
\newblock {\em Variational methods for nonlocal fractional problems}, volume
  162 of {\em Encyclopedia of Mathematics and its Applications}.
\newblock Cambridge University Press, Cambridge, 2016.
\newblock With a foreword by Jean Mawhin.

\bibitem{PeSp}
K.~Perera and C.~Sportelli.
\newblock New linking theorems with applications to critical growth elliptic
  problems with jumping nonlinearities.
\newblock {\em J. Differential Equations}, 349:284--317, 2023.

\bibitem{MR3012848}
K. Perera and M. Schechter.
\newblock {\em Topics in critical point theory}, volume 198 of {\em Cambridge
  Tracts in Mathematics}.
\newblock Cambridge University Press, Cambridge, 2013.

\bibitem{MR3168912}
X. Ros-Oton and J. Serra.
\newblock The {D}irichlet problem for the fractional {L}aplacian: regularity up
  to the boundary.
\newblock {\em J. Math. Pures Appl.}, (9), 101(3):275--302, 2014.

\bibitem{MR3089742}
R. Servadei.
\newblock The {Y}amabe equation in a non-local setting.
\newblock {\em Adv. Nonlinear Anal.}, 2(3):235--270, 2013.

\bibitem{MR3237009}
R. Servadei.
\newblock A critical fractional {L}aplace equation in the resonant case.
\newblock {\em Topol. Methods Nonlinear Anal.}, 43(1):251--267, 2014.

\bibitem{sv}
R. Servadei and E. Valdinoci.
\newblock Lewy-Stampacchia type estimates for variational
inequalities driven by (non)local operators,
\newblock {\em Rev. Mat. Iberoam.}, 29(3):1091--1126, 2013.

\bibitem{svmountain}
R. Servadei and E. Valdinoci.
\newblock Mountain Pass solutions
for nonlocal elliptic operators.
\newblock {\em J. Math. Anal. Appl.}, 389:887--898, 2012.

\bibitem{MR3060890}
R. Servadei and E. Valdinoci.
\newblock A {B}rezis-{N}irenberg result for non-local critical equations in low
  dimension.
\newblock {\em Commun. Pure Appl. Anal.}, 12(6):2445--2464, 2013.

\bibitem{svlinking}
R. Servadei and E. Valdinoci.
\newblock Variational methods for nonlocal operators of elliptic type.
\newblock {\em Discrete Contin. Dyn. Syst.}, 33(5):2105--2137, 2013.

\bibitem{MR3271254}
R. Servadei and E. Valdinoci.
\newblock The {B}rezis-{N}irenberg result for the fractional {L}aplacian.
\newblock {\em Trans. Amer. Math. Soc.}, 367(1):67--102, 2015.

\bibitem{servadeivaldinociREGO}
R. Servadei and E. Valdinoci.
\newblock Weak and viscosity solutions of the fractional Laplace equation.
\newblock {\em Publ. Mat.}, 58:133--154, 2014.

\bibitem{Vas}
J.L.~V\'{a}zquez.
\newblock Nonlinear Diffusion with Fractional Laplacian Operators.
{\em Nonlinear partial differential equations}, Abel Symp., 7, 271--298 (2012).

\end{thebibliography}
\end{document}